\documentclass[final,onefignum,onetabnum]{siamart190516}
\usepackage{graphicx}
\usepackage{subfig}
\captionsetup[subfigure]{labelformat=empty}
\usepackage{url}
\usepackage{enumerate}
\usepackage{amsmath, amssymb}
\usepackage{bbm}
\usepackage{mathrsfs}
\usepackage{booktabs,amsfonts,dcolumn}
\numberwithin{equation}{section}
\newtheorem{prop}{Proposition}

\newtheorem{remark}{Remark}
\usepackage{xcolor}
\newcommand{\diver}{\nabla\cdot}
\newcommand{\p}{\partial}
\newcommand{\curl}{\nabla\times}
\renewcommand{\diver}{\nabla\cdot}
\newcommand{\R}{\mathbb{R}}
\renewcommand{\P}{\mathbb{P}}
\newcommand{\Ha}{H_a}
\renewcommand{\Re}{\mathrm{Re}}
\newcommand{\Rm}{\mathrm{Re}_m}
\renewcommand{\d}{\,\mathrm{d}}
\renewcommand{\epsilon}{\varepsilon}
\renewcommand{\vec}{\boldsymbol}
\newcommand{\fancy}[1]{\mathcal{#1}}
\newcommand{\vphi}{\vec{\phi}}
\newcommand{\vbeta}{\vec{\beta}}

\newcommand{\vmag}{\vec{b}}
\newcommand{\vmtestC}{\vec{c}}
\newcommand{\vmtestD}{\vec{d}}
\newcommand{\EPspace}{\mathscr{P}}
\newcommand{\EPspaceh}{\mathscr{P}_h}
\newcommand{\embed }{\gamma}
\newcommand{\vfancy}[1]{\vec{\fancy{#1}}}
\newcommand{\ofancy}[1]{\overline{\fancy{#1}}}
\newcommand{\ovfancy}[1]{\overline{\vec{\fancy{#1}}}}

\title{An \textit{a posteriori} error analysis for the equations of stationary
incompressible magnetohydrodynamics
\thanks{Submitted to the editors June 5, 2020\@. \funding{J. H. Chaudhry's
work is supported by the NSF-DMS 1720402. The work of A. E. Rappaport and J. N.
Shadid was partially supported by the U.S. Department of Energy, Office of
Science, Office of Advanced Scientific Computing Research, Applied Mathematics
Program and by the U.S. Department of Energy, Office of Science, Office of
Advanced Scientific Computing Research and Office of Fusion Energy Sciences,
Scientific Discovery through Advanced Computing (SciDAC) program. Sandia
National Laboratories is a multimission laboratory managed and operated by
National Technology and Engineering Solutions of Sandia, LLC, a wholly owned
subsidiary of Honeywell International, Inc., for the U.S. Department of
Energy’s National Nuclear Security Administration under contract DE-NA0003525.
This paper describes objective technical results and analysis. Any subjective
views or opinions that might be expressed in the paper do not necessarily
represent the views of the U.S. Department of Energy or the United States
Government.}} }

\headers{\textit{A posteriori} analysis of MHD}
{J.~H.~Chaudhry, A.~E.~Rappaport, and J.~N.~Shadid}

\author{Jehanzeb H. Chaudhry\thanks{Department of Mathematics and Statistics, University of New Mexico (\email{jehanzeb@unm.edu}, \url{https://math.unm.edu/\~jehanzeb}).}
\and Ari E. Rappaport\thanks{Department of Mathematics and Statistics, University of New Mexico
and Center for Computing Research, Sandia National Laboratories, Albuquerque NM
 (\email{aerappa@sandia.gov}).}
\and John N. Shadid\thanks{Center for Computing Research, Sandia National Laboratories, Albuquerque NM and Department of Mathematics and Statistics, University of New Mexico (\email{jnshadi@sandia.gov}).}
}
\begin{document}
\maketitle

\begin{abstract}
Resistive magnetohydrodynamics (MHD) is a continuum base-level model for
conducting fluids (e.g. plasmas and liquid metals) subject to external magnetic
fields. The efficient and robust solution of the MHD system poses many
challenges due to the strongly nonlinear, non self-adjoint, and highly coupled
nature of the physics.  In this article, we develop a robust and accurate
\textit{a posteriori} error estimate for the numerical solution of the
resistive MHD equations based on the exact penalty method.  The error estimate
also isolates particular contributions to the error in a quantity of interest
(QoI) to inform discretization choices to arrive at accurate solutions.  The
tools required for these estimates involve duality arguments and computable
residuals.
\end{abstract}

\begin{keywords}
  Adjoint-based error estimation, Magnetohydrodynamics, Exact Penalty, finite elements
\end{keywords}

\begin{AMS}
  65N15, 65N30, 	65N50
\end{AMS}

\section{Introduction}
The resistive magnetohydrodynamics (MHD) equations provide a continuum model for conducting
fluids subject to magnetic fields and are often used to model important
applications e.g. higher-density, highly collisional plasmas.  In this context, MHD
calculations aid physicists in understanding both thermonuclear fusion and
astrophysical plasmas  as well as understanding the behavior of liquid metals
\cite{goedbloed_poedts_mills_romaine,muller_ulrich_buhler_leo_2010}.  
From a phenomenological perspective, the  
governing equations of MHD couple Navier-Stokes equations for fluid dynamics
with a reduced set of Maxwell's equations for low frequency electromagnetic
phenomenon. Structurally, the equations of MHD form a highly coupled, nonlinear,
non self-adjoint system of partial differential equations (PDEs). Analytical solutions to
the MHD system cannot be obtained for practical configurations; instead
numerical solutions are sought. 
The theoretical and numerical analysis of MHD dates back to the pioneering work
of Temam \cite{Temam2001}.  Finite element formulations of incompressible
resistive MHD   include stabilization methods based on variational multiscale
(VMS) approaches \cite{Lin2018, Lin2019, Trelles2014}, exact and weighted
penalty methods \cite{gunzburger_meir_peterson_1991, gerbeau_2000,
phillips_elman_cyr_shadid_pawlowski_2014, martin_dauge_2002}, first order
system least squares (FOSLS) \cite{Adler2010, Adler-Nested2010, Adler2013,
Hsieh2009} and structure preserving methods \cite{nedelec_1980,Evans1988,
Hyman1997, bochev2001matching, Miller2019}. A survey of various numerical
techniques for MHD is found in \cite{Gerbeau2006}.
In this article we restrict ourselves to the stationary MHD equations based on
the exact penalty finite element formulation, originally developed
in~\cite{gunzburger_meir_peterson_1991} from a finite element method
discretization. We do not employ specialized solver strategies e.g. block
preconditioning as the problem size we consider does not merit it.

The numerical solution of complex equations like the MHD equations often have
a significant discretization error for solution with significant fine scale
spatial structures.  This error must be quantified for the reliable use of MHD
equations in numerous science and engineering fields.  Accurate error
estimation is a key component of predictive computational science and
uncertainty quantification \cite{Estep2008,EMT09-1,JBE18}. Moreover, the  error
depends on a complex interaction between many contributions.  Thus, the
availability of an accurate error estimate and the different sources of error
also offers the potential of optimizing the choice of discretization parameters
in order to achieve desired accuracy in an efficient fashion. In this work we
leverage adjoint based \textit{a posteriori} error estimates for a quantity of
interest (QoI) related to to the solution of the MHD equations.  These
estimates  provide a concrete error analysis of different contributions of
error, as well as inform solver and discretization strategies.

In many scientific and engineering applications, the goal of running
a simulation is to compute a set of specific QoIs of the solution, for example
the drag over a plane wing in the context of the compressible Navier-Stokes
equations.  Adjoint based analysis~\cite{giles_suli, becker_rannacher_2003,
Estep1995, eehj_book_96, AO2000, BangerthRannacher} for quantifying the error
in a numerically computed QoI has found success for a wide variety of numerical
methods and discretizations ranging from finite
element~\cite{chaudhry_2018,Estep2008,Estep2010,Chaudhry2019},
finite volume~\cite{barth04}, time integration~\cite{Estep1995, CET+2016,
CEG+2015, Chaudhry2017}, operator splitting techniques~\cite{Estep2008,
Estep2010} and uncertainty quantification~\cite{EstepUQ1,EstepUQ2,JBE18}.

Adjoint based \textit{a posteriori} error analysis uses
variational analysis and duality to relate errors to computable residuals. In
particular, one solves an adjoint problem whose solution provides the
residual weighting to produce the error in the QoI. The technique also
naturally allows to identify and isolate different components of error arising
from different aspects of discretization and solution methods, by analyzing
different components of the weighted residual separately.

This article carries out the first adjoint based \textit{a posteriori} error
analysis for the MHD equations to the best of our knowledge.  The definition of
the adjoint operator to the strong form of the MHD system is not obvious since
that system is rectangular, and hence the weak form of the exact penalty method
is needed for forming the appropriate adjoint problem. We further provide
theory supporting the well-posedness of the adjoint weak form. Additionally,
the resulting \textit{a posteriori} error estimate is  decomposed to identify
various sources of error, and the efficacy  of the error estimate is
demonstrated on a set of benchmark MHD problems.

The remainder of the article is organized as follows. In \S\ref{sect:primal},
we review the equations of incompressible resistive MHD,  present the exact
penalty weak form and the finite element method to numerically solve the
problem. In \S\ref{sect:error-analysis-abs} we develop  theoretical results for
adjoint based \textit{a posteriori} error analysis for an abstract problem
representative of the exact penalty weak form. We apply these results to the
MHD equations in \S \ref{sect:MHD-specific-case} to develop an \textit{a
posteriori} error estimate.  In \S\ref{sect:numerical-results} we present
numerical results to demonstrate the accuracy and utility of the error
estimates produced by our method. In \S\ref{sect:theoretical-results} we give
details of the derivation of the nonlinear operators in the weak adjoint form
as well as a well-posedness argument for the adjoint problem.

\section{Exact penalty formulation and discretization}\label{sect:primal}
In this section we describe the nondimensionalized  equations of incompressible
stationary MHD, a stabilized weak form of the MHD system and a finite element
method for its solution.

\subsection{The MHD equations}
Throughout the rest of the paper, let $\Omega\subset\R^d$, $d = $2 or $3$ be
a bounded, convex polyhedral domain with boundary $\partial \Omega$. The
assumptions on the domain are necessary for the solution strategy we
choose, as elaborated  in \S\ref{sect:exact-penalty}.  The
nondimensional equations for stationary incompressible MHD in $\Omega$ are
given by
\begin{subequations}
\begin{align}
-\frac{1}{\Re}\Delta\vec{u} + (\vec{u}\cdot\nabla)\vec{u} + \nabla p -\kappa(\curl\vmag)\times\vmag&=\vec{f},\label{eq:MHD-momentum}\\
\diver\vec{u} &= 0,\\
\frac{\kappa}{\Rm}\curl(\curl\vmag) - \kappa\curl(\vec{u}\times\vmag) &= \vec{0},\label{eq:MHD-induction}\\
\diver\vmag &= 0,\label{eq:MHD-divB}
\end{align}\label{eq:MHD-system}
\end{subequations}
where the unknowns are the velocity $\vec{u}$, the magnetic field $\vmag$, and
the pressure $p$.  The nondimensional parameters are the fluid Reynolds number
$\Re>0$, Magnetic Reynolds number $\Rm>0$, and interaction parameter
$\kappa=\Ha^2/(\Re\Rm)$, where $\Ha>0$ is the Hartmann number. We require the
source term $\vec{f}\in\vec{H}^{-1}(\Omega)$. For $x \in \Omega$ we have
$\vec{u}(x) \in \mathbb{R}^d$, $\vmag(x) \in \mathbb{R}^d$, $p(x) \in
\mathbb{R}$ and $\vec{f}(x) \in \mathbb{R}^d$.  We supplement the system
\eqref{eq:MHD-system} with boundary conditions,
\begin{subequations}
\begin{align}
\vec{u}&=\vec{g},&&\text{on }\p\Omega,\\
\vmag\times\vec{n}&=\vec{q}\times\vec{n},&&\text{on }\p\Omega.
\end{align}\label{eq:MHD-BCs}
\end{subequations}

Referring to \eqref{eq:MHD-system}, we observe there are $2d+2$ and only $2d+1$
unknowns \cite{phillips_elman_cyr_shadid_pawlowski_2014}.  Effectively
enforcing the solenoidal constraint \eqref{eq:MHD-divB} (an involution of the
transient MHD system) is an active area of research.  Techniques include
compatible discretizations \cite{schotzau_2004,bochev2001matching}, vector
potential \cite{Adler2019, Shadid2010} and divergence cleaning
\cite{dedner_kemm_kroner_munz_schnitzer_wesenberg_2002, Kuzmin2020} as well as
the exact penalty method \cite{gunzburger_meir_peterson_1991, gerbeau_2000,
phillips_elman_cyr_shadid_pawlowski_2014}.  In this article, we consider the
exact penalty method which we further describe in \S\ref{sect:exact-penalty}.

\subsection{Function spaces for the MHD system}
We make use of the standard spaces $L^2(\Omega)$ and $H^m(\Omega)$ as well as
their vector counterparts $\vec{L}^2(\Omega)$ and $\vec{H}^m(\Omega)$. The
$L^2(\Omega)$ (or $\vec{L}^2(\Omega)$) inner product is denoted by $(\cdot,
\cdot)$ and the norm is denoted by $\| \cdot \|$, while the $H^1(\Omega)$ (or
$\vec{H}^1(\Omega)$) norm is denoted by $\| \cdot \|_1$.  The norm in
$\mathbb{R}^d$ is denoted by $\| \cdot \|_{\mathbb{R}^d}$. The details of these
function spaces are given in  Appendix \ref{app:func_spaces}. Further useful
relations used throughout the text are given in  Appendix \ref{append:vect-ind}
and Appendix \ref{app:useful_ineqs}.  For $\vmag\in \vec{H}^1(\Omega)$, we
define $\nabla\vmag:=\begin{bmatrix}\nabla b_1,\dots,\nabla b_d\end{bmatrix}^T$
as a matrix whose rows are the gradients of the components of $\vmag$.  The
relevant subspaces of $\vec{H}^1(\Omega)$ needed to satisfy the boundary
conditions (in the sense of the trace operator) are,
\begin{align}
&\vec{H}_0^1(\Omega):=\{\vec{w}\in\vec{H}^1(\Omega):\vec{w}|_{\p\Omega}\equiv\vec{0}\}\label{eq:H-zero-one},\\
&\vec{H}_\tau^1(\Omega):=\{\vec{w}\in\vec{H}^1(\Omega):(\vec{w}\times\vec{n})|_{\p\Omega}\equiv\vec{0}\}\label{eq:H-tau-one}.
\end{align}
Finally, we define the product space,
\begin{align}
\EPspace&:= \vec{H}^1_0(\Omega)\times\vec{H}^1_\tau(\Omega)\times L^2(\Omega).
\end{align}
We also remark that for $d=2$, we use the natural inclusion of
$\R^2\hookrightarrow\R^3$, $\begin{bmatrix}v_1, v_2\end{bmatrix}^T \mapsto
\begin{bmatrix}v_1, v_2, 0\end{bmatrix}^T$ to define the operators $\curl$ and
$\times$. Thus for $\vec{v},\vec{w}\in\vec{H}^1(\Omega)$, we have that
\begin{align*}
&\curl \vec{v} = \left(\frac{\p v_y}{\p x} - \frac{\p v_x}{\p y}\right)\hat{\vec{k}},\quad
\vec{v}\times\vec{w} = \left(v_xw_y - v_yw_x\right)\hat{\vec{k}}.
\end{align*}

\subsection{Exact penalty formulation}\label{sect:exact-penalty}
In this section we present the weak form of the stationary incompressible MHD
system based on the exact penalty formulation
\cite{gunzburger_meir_peterson_1991}.  The exact penalty method requires that
the domain $\Omega$ is bounded, convex and polyhedral. This ensures that
$\vec{H}(\textbf{curl},\Omega)\cap \vec{H}(\mathrm{div},\Omega)$ is
continuously embedded in $\vec{H}^1(\Omega)$ \cite{nedelec_1980, Gerbeau2006}.
We also assume homogeneous Dirichlet boundary conditions i.e.
$\vec{g}=\vec{q}=\vec{0}$. Non-homogeneous boundary conditions can be dealt
with through standard lifting arguments as discussed in
\S\ref{sec:boundary_considerations}.
The exact penalty weak problem corresponding to \eqref{eq:MHD-system} and
\eqref{eq:MHD-BCs} is: find $U=(\vec{u},\vmag,p)\in\EPspace$ such that
\begin{equation}
\mathcal{N}_{EP}(U,V) = (\vec{f},\vec{v}),\quad\forall\,V\in\EPspace,\label{eq:exact-penalty-primal}
\end{equation}
where the nonlinear form $\mathcal{N}_{EP}$ is defined for all $V=(\vec{v},
\vmtestC, q)\in\EPspace$ by
\begin{equation}
\begin{aligned}
\mathcal{N}_{EP}(U,V) :&=\frac{1}{\Re}(\nabla\vec{u},\nabla\vec{v})+
(\vfancy{C}(\vec{u}),\vec{v})- (p,\diver\vec{v}) + (q,\diver\vec{u})\\
&-\kappa(\vfancy{Y}(\vmag),\vec{v})
- \kappa(\vfancy{Z}(\vec{u},\vmag),\vmtestC)\\
&+\frac{\kappa}{\Rm}(\curl\vmag,\curl\vmtestC)
+\frac{\kappa}{\Rm}(\diver\vmag,\diver\vmtestC),
\end{aligned}\label{eq:EP-weak-form}
\end{equation}
and the nonlinear operators are defined by
\begin{subequations}
\begin{align}
\vfancy{C}(\vec{u}) &:= (\vec{u}\cdot\nabla)\vec{u},\\
\vfancy{Y}(\vmag) &:= (\curl\vmag)\times\vmag,\\
\vfancy{Z}(\vec{u},\vmag) &:=\curl(\vec{u}\times\vmag).
\end{align}
\label{eq:all-nonlin-ops}
\end{subequations}
All except the last term in the weak form arise from multiplying
\eqref{eq:MHD-momentum}-\eqref{eq:MHD-induction} by test functions and
performing integration by parts. The last term,
$\frac{\kappa}{\Rm}(\diver\vmag,\diver\vmtestC)$, effectively enforces the
solenoidal involution \eqref{eq:MHD-divB} since, assuming the aforementioned
restrictions on the domain, there exists a function (see
\cite{gunzburger_meir_peterson_1991, Girault1986}) $b_0\in H^2(\Omega)$ such
that
\begin{equation}
\diver\nabla b_0 = \diver\vmag,\text{ and }\nabla b_0\in\vec{H}_\tau^1(\Omega).
\end{equation}
Thus, we choose $V=(\vec{0},\nabla b_0, 0)$ in \eqref{eq:EP-weak-form} and use
\eqref{div-cross-int} so that \eqref{eq:exact-penalty-primal} reduces to
\begin{equation}
(\diver\vmag,\diver\nabla b_0) = (\diver\vmag,\diver\vmag) = 0,
\end{equation}
and hence \eqref{eq:MHD-divB} is satisfied almost everywhere in $\Omega$.
\begin{remark}
  \label{rem:wellposed}
  The existence of the solution to the problem \eqref{eq:exact-penalty-primal}
  is proven in \cite[Theorem 4.6]{gunzburger_meir_peterson_1991} as well as in
  \cite[Theorem 3.22]{Gerbeau2006}, while uniqueness is proven in \cite[Theorem
  4.7]{gunzburger_meir_peterson_1991} and also in \cite[Theorem
    3.22]{Gerbeau2006}. Both uniqueness proofs rely on a ``small data''
    assumption, i.e. inequalities bounding the nondimensionalised constants,
    $\Re,\Rm$ and $\kappa$, in terms of the data $\vec{f},\vec{g}$ and
    $\vec{q}$.
\end{remark}

\subsection{Finite element method}\label{sect:FEM}
We introduce the standard continuous Lagrange finite element spaces. Let
$\mathcal{T}_h$ be a simplicial decomposition of $\Omega$, where $h$ denotes
the maximum diameter of the elements of $\mathcal{T}_h$, such that the union of
the elements of $\mathcal{T}_h$ is $\Omega$, and the intersection of any two
elements is either a common edge, node, or is empty.
The standard Lagrange space finite element space of order $q$
is then
\begin{equation} \P_h^q:=\big\{v \in C(\Omega):\forall
K\in\mathcal{T}_h,\,v|_K\in\P^q(K)\big\}, \end{equation}
where $\P^q(K)$ is the space of polynomials of degree at most $q$ defined on
the element $K$. Additionally, our finite element space satisfies the
Ladyzhenskaya-Babu\v{s}ka-Brezzi condition stability
condition~\cite{boffi2013mixed} for the velocity pressure pair, e.g.
$\EPspaceh=\P_h^2(\Omega)\times \P_h^1(\Omega)\times \P_h^1(\Omega)$.  Then the
discrete problem to find an approximate solution $U_h=(\vec{u}_h, \vmag_h,
p_h)\in\EPspaceh$ to \eqref{eq:EP-weak-form} is,
\begin{equation}
\mathcal{N}_{EP}(U_h,V_h)=(\vec{f},\vec{v}_h)\quad \forall\,V_h\in\EPspaceh.\label{eq:primal-FE}
\end{equation}
Note there is no restriction on the finite element space for $\vmag_h$,
which is an advantage of this method. 
The existence and uniqueness of the solution of the discrete problem
\eqref{eq:primal-FE} is also demonstrated in Gunzburger et al.
\cite{gunzburger_meir_peterson_1991} with the same assumptions of the data as
discussed in Remark \ref{rem:wellposed}.

\subsection{Quantity of interest (QoI)}

The goal of a numerical simulation is often to compute some functional of the
solution, that is, the QoI.  In particular, QoIs considered in this article
have the generic form,
\begin{equation}
\text{QoI} = \int_{\Omega}\Psi\cdot U\d x = (\Psi, U) \label{eq:QoI}
\end{equation}
where $U$ is defined by \eqref{eq:exact-penalty-primal} and $\Psi \in
\vec{L}^2(\Omega) \times \vec{L}^2(\Omega) \times L^2(\Omega) \equiv
[L^2(\Omega)]^{2d+1}$.  For example in two dimensions, to compute the average
of the $y$ component of velocity $u_y$ over a region $\Omega_c\subset\Omega$,
set
$\Psi=\frac{1}{|\Omega_c|}\begin{bmatrix}0, \mathbbm{1}_{\Omega_c},0 , 0,
0)\end{bmatrix}^T$, where $\mathbbm{1}_S$ denotes the characteristic function
over a set $S$.  In the examples presented later, the QoIs physically represent
quantities representative of the average flow rate, or the average induced
magnetic field. We seek to compute error estimates in the QoI using duality
arguments as presented in the following subsection.

\section{Abstract \textit{a posteriori} error analysis}\label{sect:error-analysis-abs}

In this section we consider an abstract variational setting for \textit{a
posteriori} analysis based on the ideas from
\cite{Estep1995,Eriksson1995,giles_suli,AO2000, BangerthRannacher}.  Let
$\mathscr{W}$ be a Hilbert space with inner-product $\langle\cdot,\cdot\rangle$
and let $\mathscr{V}$ be a dense subspace of $\mathscr{W}$.  Throughout this
section $u \in \mathscr{V}$ refers to the solution of an abstract variational
problem (e.g. solution of \eqref{eq:bilinvar} or
\eqref{eq:generic-nonlin-problem}). An example of such a variational problem
is the exact penalty problem as described in \S\ref{sect:exact-penalty}.
Moreover, we denote $u_h\in \mathscr{V}_h$ as a numerical
approximation to $u$, where  $\mathscr{V}_h$  is a finite dimensional subspace
of $\mathscr{V}$, and denote the error as $e = u - u_h$. Finally, $w$ and $v$
refer to arbitrary functions, and their spaces are made clear when we use these
functions. For the QoI, consider bounded linear functionals of the form,
\begin{equation}
  Q(w) = \langle \psi, w \rangle, \quad \forall w \in \mathscr{W},
  \label{eq:abs-QoI_blf}
\end{equation}
for some fixed $\psi \in \mathscr{W}$. The QoI is then,
\begin{equation}
 Q(u) = \langle \psi, u \rangle.
 \label{eq:abs-QoI}
 \end{equation}

For example, in \eqref{eq:QoI}, $\langle \psi,
u \rangle = ( \Psi, U)$, that is the inner-product  is the $L^2$ inner product.
The aim of the \textit{a posteriori} analysis is to compute the error in the
QoI, $ Q(u) - Q(u_h) = \langle \psi, u \rangle - \langle \psi, u_h \rangle
= \langle \psi, e \rangle$. We briefly describe the analysis for linear
problems in \S \ref{sec:apost_lin} and then consider nonlinear problems in \S
\ref{sec:apost_nonlin}.

\subsection{Linear variational problems}
\label{sec:apost_lin}
We consider the problem of evaluating \eqref{eq:abs-QoI} where $u$ is the
solution to the linear variational problem: find $u\in\mathscr{V}$ such that
\begin{equation}
  a(u,v) = \langle f, v\rangle,\quad\forall v\in\mathscr{V},
  \label{eq:bilinvar}
\end{equation}
where $a:\mathscr{V}\times \mathscr{V}\to \R$ is a bilinear form. We then
define the adjoint bilinear form $a^*:\mathscr{V}\times \mathscr{V} \to\R$ as
the unique bilinear form satisfying
\begin{equation}
   a^*(w,v) = a(v,w),\quad\forall w,v\in \mathscr{V},\label{eq:varadj}
\end{equation}
see \cite{giles_suli, becker_rannacher_2003}.
If $\phi$ solves the dual problem: find $\phi\in \mathscr{V}$ such that
\begin{equation}
   a^*(\phi,v) = \langle \psi, v\rangle,\quad\forall v\in \mathscr{V},\label{eq:lin-adj-defn}
\end{equation}
then we have the following error representation.
\begin{theorem}
   The error in the QoI \eqref{eq:abs-QoI} is represented as $\langle \psi,
   e\rangle  = \langle f,\phi\rangle - a(u_h, \phi)$, where $u$ is the
   solution to  \eqref{eq:bilinvar}, $u_h $ is a numerical approximation, $e
   = u - u_h$ and $\phi$ is the solution to
  \eqref{eq:lin-adj-defn}.
\end{theorem}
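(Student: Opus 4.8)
The plan is to exploit the two defining identities---the primal problem \eqref{eq:bilinvar} and the dual problem \eqref{eq:lin-adj-defn}---together with the defining relation \eqref{eq:varadj} of the adjoint form, using the error $e$ itself as the test function. The crucial preliminary observation is that $e = u - u_h \in \mathscr{V}$: since $u \in \mathscr{V}$ and $u_h \in \mathscr{V}_h \subset \mathscr{V}$, their difference lies in $\mathscr{V}$ and is therefore an admissible test function in each of the variational statements above.

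First I would insert $v = e$ into the dual problem \eqref{eq:lin-adj-defn}, which gives $\langle \psi, e \rangle = a^*(\phi, e)$ directly from the definition of the QoI functional. Next I would apply the defining relation \eqref{eq:varadj} of the adjoint bilinear form to interchange the arguments, obtaining $a^*(\phi, e) = a(e, \phi)$. Expanding $e = u - u_h$ and using bilinearity of $a$ in its first slot then yields $a(e, \phi) = a(u, \phi) - a(u_h, \phi)$.

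Finally I would invoke the primal problem \eqref{eq:bilinvar} with test function $v = \phi$, replacing $a(u, \phi)$ by $\langle f, \phi \rangle$. Chaining these equalities gives $\langle \psi, e \rangle = \langle f, \phi \rangle - a(u_h, \phi)$, the asserted representation. Observe that $\langle f, \phi \rangle - a(u_h, \phi)$ is precisely the residual of the computed solution $u_h$ weighted against the adjoint $\phi$, so the identity expresses the QoI error as a computable weighted residual once $\phi$ is known.

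There is no substantive obstacle here; the argument is a short chain of exact substitutions with no estimates or approximations. The only point warranting care is the admissibility of $e$ as a test function, i.e. the inclusion $\mathscr{V}_h \subset \mathscr{V}$ built into the abstract setting---without it one could not evaluate $a^*(\phi, e)$ and $a(u, \phi)$ as written, and the cancellation would fail. I would also note that well-posedness of \eqref{eq:lin-adj-defn} is tacitly assumed so that $\phi$ exists, but the representation itself is purely algebraic given $\phi$.
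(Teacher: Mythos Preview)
Your proposal is correct and follows exactly the same chain of equalities as the paper's proof: $\langle \psi, e\rangle = a^*(\phi, e) = a(e,\phi) = a(u,\phi) - a(u_h,\phi) = \langle f,\phi\rangle - a(u_h,\phi)$. Your additional remarks on the admissibility of $e\in\mathscr{V}$ and the tacit assumption that $\phi$ exists are sound clarifications that the paper leaves implicit.
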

\begin{proof}
  The proof is a straightforward computation,
  \begin{equation}
  \begin{aligned}
  \langle \psi, e\rangle = a^*(\phi, e)&=a(e, \phi)= a(u,\phi) - a(u_h, \phi)= \langle f,\phi\rangle - a(u_h, \phi).
  \end{aligned}\label{eq:abstract-err-rep}
  \end{equation}
\end{proof}
Note from  the proof above that a simple yet important property of the
adjoint bilinear form $a^*(\cdot,\cdot)$ is,
\begin{equation}
   \label{eq:key_lin_proof}
   a^*(v, e)= a(u,v) - a(u_h, v),
\end{equation}
for $w \in \mathscr{V}$.  We will use this property in motivation the analysis
for nonlinear problems in \S\ref{sec:apost_nonlin}.
\subsection{Nonlinear variational problems}
\label{sec:apost_nonlin}
Again, our goal is to evaluate \eqref{eq:abs-QoI} where now $u$ is the solution
to the \textit{nonlinear} variational problem: find $u$ in $\mathscr{V}$ such
that 
\begin{equation}
  \fancy{N}(u,v) = \langle f, v\rangle,\quad\forall v\in \mathscr{V},
  \label{eq:generic-nonlin-problem}
\end{equation} 
and $\fancy{N}:\mathscr{V}\times \mathscr{V}\to \R$ is linear in the second
argument but may be nonlinear in the first argument.  There is no
straightforward definition of an adjoint operator corresponding to  a nonlinear
problem.  However, a common choice useful for various kinds of analysis  is
based on linearization~\cite{MAS1996,Marchuk1995,Chaudhry2019, Chaudhry2017,
chaudhry_2018, Estep2010}. This choice enables the definition of an adjoint
bilinear form $\ofancy{N}^*(\cdot,\cdot)$ which satisfies the useful property,
\begin{equation}
	\label{eq:main_linearized_prop}
	 \ofancy{N}^*(v,e) =  \fancy{N}(u,v) - \fancy{N}(u_h,v),
\end{equation}
for all $v \in \mathscr{V}$. This property is inspired by
\eqref{eq:key_lin_proof}.

We now present a specific case of this analysis such the problem
\eqref{eq:generic-nonlin-problem}  mimics the setup of the exact penalty
problem in \eqref{eq:exact-penalty-primal}. Let
$\mathscr{V}=\prod_{i=1}^n\mathscr{V}_i$ and
$\mathscr{W}=\prod_{i=1}^n\mathscr{W}_i$ be product spaces of Hilbert spaces
such that $\mathscr{V}_i$ is a dense subspace of $\mathscr{W}_i$ for each $i$.
The left hand side in problem \eqref{eq:generic-nonlin-problem} is now more
specifically given by
\begin{equation} \label{eq:sum_nonlin_forms}
\fancy{N}(v,w) = \sum_{i=1}^m\langle N_i(v),w_{\ell_i}\rangle + a(v, w),
\end{equation}
where $a(\cdot,\cdot)$ is a bilinear form,  $\ell_i\in\{1,\dots,n\}$ and $N_i:
\mathscr{V} \rightarrow \mathscr{W}_{\ell_i}$ are nonlinear operators.  For
a  solution/approximation pair ($u/u_h$) to \eqref{eq:generic-nonlin-problem},
define the matrix $\ofancy{J}$, where each entry
$\ofancy{J}_{ij}:\mathscr{V}_j\to\mathscr{W}_{\ell_i}$ is given by
\begin{align}
\ofancy{J}_{ij}v_j = \int_0^1\frac{\p N_i}{\p u_j}(su+(1-s)u_h)\d s\,v_j,\label{eq:averaged-jacobian}
\end{align}
where $v_j \in \mathscr{V}_j$ and  $\frac{\p N_i}{\p u_j}(\cdot)$ denotes the
partial derivative of $N_i$ with respect to the argument $u_j$. Define the
linearized operator 
$\bar{N}_i:\mathscr{V}\to\mathscr{W}_{\ell_i}$ by
\begin{equation}
\begin{aligned}
\bar{N}_iv&=\int_0^1\frac{\p N_i}{\p u}(su+(1-s)u_h)\d s\cdot\,v\\
&=\sum_{j=1}^n\int_0^1\frac{\p N_i}{\p u_j}(su+(1-s)u_h)\d s\,v_j
=\sum_{j=1}^n\ofancy{J}_{ij}v_j,
\end{aligned}\label{eq:Nibar}
\end{equation}
for $v \in \mathscr{V}$.
Now since each $\bar{N}_i$ is linear, we may
define the bilinear forms, $\ofancy{\nu}_i:\mathscr{V}\times\mathscr{V}\to\R$,
by
\begin{align}
  \label{eq:nui-defn}
  \ofancy{\nu}_i(v,w) &= \langle\bar{N}_iv,w_{\ell_i}\rangle
  =\left\langle\sum_{j=1}^n\ofancy{J}_{ij}v_j,w_{\ell_i}\right\rangle
  =\sum_{j=1}^n\left\langle\ofancy{J}_{ij}v_j,w_{\ell_i}\right\rangle,
\end{align}
for $v,w \in \mathscr{V}$.
Define $\ofancy{\nu}_i^*(v,w) = \ofancy{\nu}_i(w,v)$, and adjoint operators $\ofancy{J}_{ij}^*$ to $\ofancy{J}_{ij}$ satisfying 
\begin{equation}
\label{eq:basic_adoint_id}
\langle \ofancy{J}_{ij} w, v \rangle = \langle w , \ofancy{J}_{ij}^* v\rangle
\end{equation}
for $w \in \mathscr{V}_j$ and $v \in \mathscr{V}_{\ell_i}$. Hence, we can also write using the definition \eqref{eq:nui-defn},
\[\ofancy{\nu}^*_i(v,w) = \sum_{j=1}^n\langle w_j, \ofancy{J}_{ij}^*v_{\ell_i}\rangle.\]
for $v,w \in \mathscr{V}$. Also since $a(\cdot,\cdot)$ in
\eqref{eq:sum_nonlin_forms} is a bilinear form, we have from the definition
\eqref{eq:varadj} that $a^*(w,v) = a(v,w)$ for $v,w \in \mathscr{V}$.
With these definitions in mind, we further define a composite adjoint bilinear
form, $\ofancy{N}^*:\mathscr{V}\times\mathscr{V}\to\R$, as
\begin{equation}
\label{eq:prod_adj_weak_form}
\begin{aligned}
\ofancy{N}^*(v,w) &= \sum_{i=1}	^m\ofancy{\nu}^*_i(v,w) + a^*(v,w)
=\sum_{i=1}^m\sum_{j=1}^n\langle
w_j,\ofancy{J}_{ij}^*v_{\ell_i}\rangle+a^*(v,w),
\end{aligned}
\end{equation}
for $u,v \in \mathscr{V}$. 
Then if $\phi \in \mathscr{V}$ solves the dual problem,
\begin{equation}
\label{eq:adj_prod_spaces}
\ofancy{N}^*(\phi, v) = \langle\psi, v\rangle,\,\forall v\in \mathscr{V},
\end{equation}
we then have the following abstract error representation.
\begin{theorem}\label{thm:rep}
 The error in the QoI \eqref{eq:abs-QoI} is represented as $\langle\psi,
 e\rangle =\langle f,\phi\rangle -\mathcal{N}(u_h, \phi)$ where $u$ is the
 solution to \eqref{eq:generic-nonlin-problem}, $u_h$ is a numerical
 approximation of $u$, $e=u-u_h$, and $\phi$ is the solution to
 \eqref{eq:adj_prod_spaces}.
\end{theorem}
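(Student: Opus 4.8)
The plan is to mirror the linear error representation \eqref{eq:abstract-err-rep} exactly, with the linearized adjoint form $\ofancy{N}^*$ playing the role that $a^*$ played in the linear case. The entire argument rests on first establishing the \emph{key linearization identity} \eqref{eq:main_linearized_prop},
\[
\ofancy{N}^*(v,e) = \fancy{N}(u,v) - \fancy{N}(u_h,v), \quad \forall v \in \mathscr{V},
\]
after which the representation drops out of a three-line computation identical in spirit to \eqref{eq:key_lin_proof}--\eqref{eq:abstract-err-rep}.

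First I would prove this identity. Unfolding the definition \eqref{eq:prod_adj_weak_form} of $\ofancy{N}^*$ and using the adjoint relations $\ofancy{\nu}_i^*(v,e) = \ofancy{\nu}_i(e,v)$ and $a^*(v,e) = a(e,v)$, one obtains $\ofancy{N}^*(v,e) = \sum_{i=1}^m \ofancy{\nu}_i(e,v) + a(e,v) = \sum_{i=1}^m \langle \bar{N}_i e, v_{\ell_i}\rangle + a(e,v)$ by the definition \eqref{eq:nui-defn}. The crux is the claim that the averaged Jacobian collapses the path integral exactly, i.e. $\bar{N}_i e = N_i(u) - N_i(u_h)$. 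This is nothing but the fundamental theorem of calculus: setting $g_i(s) = N_i(su + (1-s)u_h)$, the chain rule gives $g_i'(s) = \frac{\p N_i}{\p u}(su+(1-s)u_h)\cdot(u-u_h)$, so integrating the defining formula \eqref{eq:Nibar} over $s \in [0,1]$ with $v = e = u - u_h$ yields $\bar{N}_i e = g_i(1) - g_i(0) = N_i(u) - N_i(u_h)$. Substituting this in and using bilinearity to write $a(e,v) = a(u,v) - a(u_h,v)$, the terms regroup into $\fancy{N}(u,v) - \fancy{N}(u_h,v)$ via the decomposition \eqref{eq:sum_nonlin_forms}, which establishes \eqref{eq:main_linearized_prop}.

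With the identity in hand the representation is immediate. Taking $v = e$ in the dual problem \eqref{eq:adj_prod_spaces} gives $\langle \psi, e\rangle = \ofancy{N}^*(\phi, e)$; applying \eqref{eq:main_linearized_prop} with $v = \phi$ turns the right-hand side into $\fancy{N}(u,\phi) - \fancy{N}(u_h,\phi)$; and finally the primal equation \eqref{eq:generic-nonlin-problem} tested against $v = \phi$ replaces $\fancy{N}(u,\phi)$ by $\langle f, \phi\rangle$, delivering $\langle \psi, e\rangle = \langle f, \phi\rangle - \fancy{N}(u_h, \phi)$.

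The only genuinely delicate point is the fundamental-theorem-of-calculus step: it presupposes that each $N_i$ is differentiable along the segment joining $u_h$ to $u$ and that $s \mapsto \frac{\p N_i}{\p u}(su+(1-s)u_h)\cdot(u-u_h)$ is integrable, so that $\int_0^1 g_i'(s)\,\d s = g_i(1) - g_i(0)$ holds. For the MHD nonlinearities \eqref{eq:all-nonlin-ops}, which are quadratic (hence smooth) in their arguments, both conditions are automatic; in the abstract setting I would simply record this regularity as a standing hypothesis. Everything else is bookkeeping with the finite sum over $i$ and the product-space index $\ell_i$, and presents no real obstacle.
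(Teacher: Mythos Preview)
Your proof is correct and follows essentially the same route as the paper: both arguments start from $\langle\psi,e\rangle = \ofancy{N}^*(\phi,e)$, unwind the adjoint via $\ofancy{\nu}_i^*(\phi,e)=\ofancy{\nu}_i(e,\phi)=\langle\bar N_i e,\phi_{\ell_i}\rangle$ and $a^*(\phi,e)=a(e,\phi)$, invoke the fundamental theorem of calculus to obtain $\bar N_i e = N_i(u)-N_i(u_h)$, and then regroup via \eqref{eq:sum_nonlin_forms} and the primal equation. The only cosmetic difference is that you first establish \eqref{eq:main_linearized_prop} for generic $v$ and then specialize to $v=\phi$, whereas the paper carries $\phi$ through the computation directly; your explicit remark on the regularity needed for the FTC step is a welcome clarification that the paper leaves implicit.
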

\begin{proof}
We compute, starting by replacing $v$ by $e$ in \eqref{eq:adj_prod_spaces},
\begin{align*}
\langle \psi, e\rangle 
&= \ofancy{N}^*(\phi, e) = \sum_{i=1}^m\sum_{j=1}^n\langle e_j,\ofancy{J}_{ij}^*\phi_{\ell_i}\rangle + a^*(\phi,e)\\
&=\sum_{i=1}^m\sum_{j=1}^n\langle\ofancy{J}_{ij}e_j,\phi_{\ell_i}\rangle
                       + a(e,\phi)\\
&=\sum_{i=1}^m\langle\overline{N}_ie,\phi_{\ell_i}\rangle + a(e,\phi)\\
&= \sum_{i=1}^m\langle N_i(u) - N_i(u_h),\phi_{\ell_i}\rangle + a(u,\phi) - a(u_h, \phi)\\
&= \sum_{i=1}^m\langle N_i(u) ,\phi_{\ell_i}\rangle + a(u,\phi) -\sum_{i=1}^m \langle N_i(u_h),\phi_{\ell_i}\rangle - a(u_h, \phi)\\
&=\fancy{N}(u,\phi) - \fancy{N}(u_h,\phi) = \langle f,\phi \rangle - \fancy{N}(u_h,\phi).
\end{align*}
\end{proof}
The main result of this theorem is that computing the adjoint to a nonlinear
form is reduced to computing the adjoint for the averaged entries,
$\ofancy{J}_{ij}$. 
 
\section{\textit{A posteriori} error estimate for the MHD equations}
\label{sect:MHD-specific-case}
The analysis in \S \ref{sec:apost_nonlin} applies directly to the MHD
equations.  The inner product $\langle \cdot, \cdot \rangle$ of the last
section is represented by the $[L^2(\Omega)]^{2d+1}$ inner product
$(\cdot,\cdot)$.  The  linear and nonlinear terms in the exact penalty weak
form \eqref{eq:exact-penalty-primal} are mapped to match
\eqref{eq:sum_nonlin_forms}.
The  mapping between the abstract formulation and MHD equation is shown in
Table~\ref{tab:mapping_abstract_to_mhd}.

\begin{table}[!ht]
   \centering
   \subfloat[\label{tab:not_a}]{
     \small
     \centering
     \begin{tabular}{c|c}\hline
     		Abstract & MHD\\					\hline
       	$\langle, \rangle$ & $(,)$\\
        $m$ & $3$\\
        $\fancy{N}$ & $\fancy{N}_{EP}$\\
        $u$ & $U$\\
        $v$ & $V$\\        
        $N_i$ & $N_{EP,i}$ \\ \hline        
     \end{tabular}
   }
   \hfill
   \subfloat[\label{tab:not_b}]{
     \small
     \centering
     \begin{tabular}{c|c}\hline
     Abstract & MHD\\					\hline
       $\langle f , v \rangle$ & $(\vec{f},\vec{v})$\\ 
        $u_1$ & $U_1 \equiv \vec{u}$\\
        $u_2$ & $U_2 \equiv \vmag$\\
        $u_3$ & $U_3 \equiv p$\\
        $v_1$ & $V_1 \equiv \vec{v}$\\
        $v_2$ & $V_2 \equiv \vmtestC$\\
        \hline
     \end{tabular}
   }
   \hfill
   \subfloat[\label{tab:not_c}]{
     \small
     \centering
     \begin{tabular}{c|c}\hline
     Abstract & MHD\\					\hline       
        $v_3$ & $V_3 \equiv q$\\
        $\ofancy{J}_{11}^*$& $\ovfancy{Z}_{\vec{u}}^*$ \\
        $\ofancy{J}_{12}^*$& $\ovfancy{Z}_{\vmag}^*$ \\
        $\ofancy{J}_{21}^*$& $\ovfancy{Y}^*$ \\
        $\ofancy{J}_{31}^*$& $\ovfancy{C}^*$ \\
        $a$ & $a_{EP}$\\ \hline
     \end{tabular}
   }
   \caption{Mapping between the abstract framework in \S \ref{sect:error-analysis-abs} and the MHD equation in \S \ref{sect:MHD-specific-case}. $\fancy{N}_{EP}$ is given in \eqref{eq:abstract_EP}, $N_{EP,i}$ in \eqref{eq:mhd_N_forms}, $a_{EP}$ in \eqref{eq:mhd_aep} and $\ovfancy{Z}_{\vec{u}}^*, \ovfancy{Z}_{\vmag}^*,\ovfancy{Y}^*,\ovfancy{C}^*$ are given in  \eqref{eq:averaged-entries}.} \label{tab:mapping_abstract_to_mhd}
\end{table}
 For the exact penalty weak form, we have that
\begin{equation}
\label{eq:abstract_EP}
\fancy{N}_{EP}(U,V) = \sum_{i=1}^3(N_{EP,i}(U), V_{\ell_i}) + a_{EP}(U,V),
\end{equation}
where
\begin{equation}
\label{eq:mhd_N_forms}
\begin{aligned}
	(N_{EP,1}(U)\color{black}, V_{2})  &= (\vfancy{Z}(\vec{u},\vmag)\color{black},\vmtestC),\\
	(N_{EP,2}(U)\color{black}, V_{1})  &= (\vfancy{Y}(\vmag)\color{black},\vec{v}),\\
	(N_{EP,3}(U)\color{black}, V_{1})  &= (\vfancy{C}(\vec{u})\color{black}, \vec{v}),
\end{aligned}
\end{equation}
$\vfancy{Z}, \vfancy{Y}, \vfancy{C}$ are in turn defined in \eqref{eq:all-nonlin-ops}, and
\begin{equation}
\label{eq:mhd_aep}	
\begin{aligned}
a_{EP}(U,V) &= \frac{1}{\Re}(\nabla\vec{u},\nabla\vec{v})- (p,\diver\vec{v}) + (q,\diver\vec{u})\\
&+\frac{\kappa}{\Rm}(\curl\vmag,\curl\vmtestC)
+\frac{\kappa}{\Rm}(\diver\vmag,\diver\vmtestC).
\end{aligned}
\end{equation}
The entries
$\ofancy{J}_{11}^*V_2=\ovfancy{Z}_{\vec{u}}^*\vmtestC$,
$\ofancy{J}_{12}^*V_2=\ovfancy{Z}_{\vmag}^*\vmtestC$, 
$\ofancy{J}_{21}^*V_1=\ovfancy{Y}^*\vec{v}$ and $\ofancy{J}_{31}^*V_1=\ovfancy{C}^*\vec{v}$
are,
\begin{equation}
\begin{aligned}
\ovfancy{Z}_{\vec{u}}^*\,\vmtestC&=\tfrac{1}{2}(\vec{u}+\vec{u}_h)\times(\curl\vmtestC),\\
\ovfancy{Z}_{\vmag}^*\,\vmtestC&=-\tfrac{1}{2}(\vmag+\vmag_h)\times(\curl\vmtestC),\\
\ovfancy{Y}^*\vec{v}&=\tfrac{1}{2}\big(-(\curl(\vmag+\vmag_h)\times\vec{v})  +\curl((\vmag+\vmag_h)\times\vec{v})\big),\\
\ovfancy{C}^*\vec{v}&=\tfrac{1}{2}\big((\nabla\vec{u}+\nabla\vec{u}_h)^T\vec{v}
-\left(((\vec{u}+\vec{u}_h)\cdot\nabla)\vec{v}\right)-\big(\diver(\vec{u}+\vec{u}_h)\big)\vec{v},
\end{aligned}
\label{eq:averaged-entries}
\end{equation}
while the remaining $\ofancy{J}_{ij}^*$  entries are zero.
The details of the derivation are given in \S
\ref{sec:adj_deriv_details}.

\subsection{Adjoint problem for incompressible MHD}\label{sect:MHD-weak-adjoint}
We are now prepared to pose a weak adjoint problem corresponding to exact
penalty primal problem \eqref{eq:exact-penalty-primal}.  Based on
\eqref{eq:abstract_EP}, \eqref{eq:averaged-entries} and
\eqref{eq:adj_prod_spaces},
the weak dual problem is therefore be stated as: find $\Phi=(\vphi, \vbeta,
\pi)\in\EPspace$ such that
\begin{equation}
\ofancy{N}_{EP}^*(\Phi,V)=(\Psi,V),\quad\forall\,V=(\vec{v},
\vmtestC, q)\in\EPspace,\label{eq:EP-dual-problem}
\end{equation}
with
\begin{equation}
\begin{aligned}
\ofancy{N}_{EP}^*(\Phi,V)&=\frac{1}{\Re}(\nabla\vphi,\nabla\vec{v})+
\left(\ovfancy{C}^*\vphi, \vec{v}\right)
+(\diver\vec{v}, \pi)- (\diver\vphi, q)\\
&+\frac{\kappa}{\Rm}\left(\curl\vbeta,\curl\vmtestC\right)
+ \frac{\kappa}{\Rm}\left(\diver\vbeta,\diver\vmtestC\right)\\
&-\kappa\left(\ovfancy{Y}^*\vphi,\vmtestC\right)
-\kappa\left(\ovfancy{Z}_{\vec{u}}^*\vbeta,\vec{v}\right)-
\kappa\left(\ovfancy{Z}_{\vmag}^*\vbeta,\vmtestC\right).
\end{aligned}\label{eq:EP-weak-formula}
\end{equation}
Here recall that $\Psi$ is defined by \eqref{eq:QoI}.
The forms of the linear operators $\ovfancy{C}^*,\ovfancy{Y}^*$,
$\ovfancy{Z}_{\vec{u}}^*$ and $\ovfancy{Z}_{\vmag}^*$ are given in
\eqref{eq:averaged-entries}.  We discuss the well-posedness of the adjoint
problem \eqref{eq:EP-dual-problem} in \S \ref{sec:weak_form_adj_well_posedness}.
\subsection{Error representation} \label{sec:err_rep_mhd_homo}

In order to discuss an error representation we need to make the following definition
\begin{definition}
  \label{defn:err}
Define the monolithic error by $E=\begin{bmatrix}\vec{e}_{\vec{u}},
\vec{e}_{\vmag}, e_p\end{bmatrix}^T$ with component errors 
\begin{align}
\vec{e}_{\vec{u}} =\vec{u}-\vec{u}_h,\,\vec{e}_{\vmag} =\vmag-\vmag_h,\,e_p = p-p_h.
\end{align}
where $(\vec{u},\vmag, p)\in\EPspace$ is the solution to 
  \eqref{eq:exact-penalty-primal} and $(\vec{u}_h,\vmag_h,
p_h)\in\EPspace_h$ is the solution to \eqref{eq:primal-FE}.
\end{definition}
We then have the following error representation.
\begin{theorem}[Error representation for exact penalty]\label{thm:err-rep-ep}
The error in the numerical approximation of the QoI \eqref{eq:QoI}
satisfies
\begin{align*}
	(\Psi, E)  = (\vec{f},\vphi) - &\bigg[\frac{1}{\Re}(\nabla\vec{u}_h,\nabla\vphi)+ (\vec{u}_h\cdot\nabla\vec{u}_h,\vphi)\\
	&- (p_h, \diver\vphi)+ \kappa((\curl\vmag_h)\times\vmag_h,\vphi) + (\diver\vec{u}_h,\pi)\\
	&+\frac{\kappa}{{\Rm}}(\curl\vmag_h,\curl\vbeta)+ \kappa(\curl(\vec{u}_h\times\vmag_h),\vbeta)\\
	&+ \frac{\kappa}{\Rm}(\diver\vmag_h,\diver\vbeta)\bigg],
	\end{align*}
where $\Phi=(\vphi,\vbeta,\pi)$ is defined in  \eqref{eq:EP-dual-problem}.
\end{theorem}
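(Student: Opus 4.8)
The plan is to obtain this statement as an immediate corollary of the abstract error representation in Theorem~\ref{thm:rep}, once the exact penalty problem has been recast into the product-space template of \S\ref{sec:apost_nonlin}. Concretely, I would first verify that $\fancy{N}_{EP}$ from \eqref{eq:EP-weak-form} is exactly of the form \eqref{eq:sum_nonlin_forms}, with $m=3$ nonlinear pieces plus a bilinear remainder; this is precisely the content of the decomposition \eqref{eq:abstract_EP} together with the identifications in Table~\ref{tab:mapping_abstract_to_mhd}. Here $\mathscr{V}=\EPspace$, the inner product $\langle\cdot,\cdot\rangle$ is the $[L^2(\Omega)]^{2d+1}$ product, the three nonlinear operators are $\vfancy{Z},\vfancy{Y},\vfancy{C}$ from \eqref{eq:all-nonlin-ops} slotted into their respective test components $\ell_i$, and $a_{EP}$ from \eqref{eq:mhd_aep} is the bilinear part. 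Since each of $\vfancy{C},\vfancy{Y},\vfancy{Z}$ is quadratic, the averaged Jacobians \eqref{eq:averaged-jacobian} are well defined and the fundamental-theorem-of-calculus identity $\bar N_i E = N_{EP,i}(U)-N_{EP,i}(U_h)$ holds exactly, so the hypotheses underlying Theorem~\ref{thm:rep} are met with $u=U$, $u_h=U_h\in\EPspace_h\subset\EPspace$, and $e=E$.

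Next I would confirm that the dual problem \eqref{eq:EP-dual-problem} is the faithful specialization of the abstract dual problem \eqref{eq:adj_prod_spaces}, that is, that $\ofancy{N}_{EP}^*$ in \eqref{eq:EP-weak-formula} coincides with the composite adjoint form \eqref{eq:prod_adj_weak_form} assembled from the block adjoints $\ovfancy{Z}_{\vec{u}}^*,\ovfancy{Z}_{\vmag}^*,\ovfancy{Y}^*,\ovfancy{C}^*$ of the averaged Jacobian entries. I would treat the explicit computation of these adjoints in \eqref{eq:averaged-entries} as established in \S\ref{sec:adj_deriv_details}; for the present theorem it is needed only that $\Phi=(\vphi,\vbeta,\pi)$ solves \eqref{eq:EP-dual-problem}, so that $\Phi$ plays the role of the abstract $\phi$ and $\Psi$ the role of $\psi$.

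With the mapping in place, Theorem~\ref{thm:rep} yields directly $(\Psi,E)=(\vec{f},\vphi)-\fancy{N}_{EP}(U_h,\Phi)$, where the abstract forcing pairing $\langle f,\phi\rangle$ reduces to $(\vec{f},\vphi)$ because $\vec{f}$ enters only the momentum equation and hence pairs solely with the dual velocity $\vphi$. The final step is purely mechanical: substitute $U_h=(\vec{u}_h,\vmag_h,p_h)$ and $\Phi=(\vphi,\vbeta,\pi)$ into the explicit expression \eqref{eq:EP-weak-form}, using $\vfancy{C}(\vec{u}_h)=(\vec{u}_h\cdot\nabla)\vec{u}_h$, $\vfancy{Y}(\vmag_h)=(\curl\vmag_h)\times\vmag_h$, and $\vfancy{Z}(\vec{u}_h,\vmag_h)=\curl(\vec{u}_h\times\vmag_h)$, then collect the resulting eight terms into the bracketed residual. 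Care is needed only in carrying the coefficients $1/\Re$, $\kappa$, $\kappa/\Rm$ and the signs of the pressure, divergence, and $\vfancy{Y},\vfancy{Z}$ contributions correctly through this substitution.

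I do not expect a genuine obstacle, since the statement is a corollary of the abstract theorem and the only substantive input, namely the correctness of the adjoints in \eqref{eq:averaged-entries} that guarantees the computed $\Phi$ is the proper weighting, is deferred to \S\ref{sec:adj_deriv_details}. If there is any delicate point, it is confirming that $\ofancy{N}_{EP}^*$ as displayed in \eqref{eq:EP-weak-formula} is exactly the abstract adjoint \eqref{eq:prod_adj_weak_form}: the error representation itself does not exhibit the adjoint operators and rests entirely on $\Phi$ solving the correct dual problem, so the entire force of the result is loaded into that identification rather than into the short computation producing the bracket.
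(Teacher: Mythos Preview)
Your proposal is correct and follows exactly the paper's approach: invoke Theorem~\ref{thm:rep} with the identifications of Table~\ref{tab:mapping_abstract_to_mhd} to obtain $(\Psi,E)=(\vec{f},\vphi)-\fancy{N}_{EP}(U_h,\Phi)$, and then expand $\fancy{N}_{EP}(U_h,\Phi)$ using \eqref{eq:EP-weak-form}. The paper's proof is the one-line version of what you describe, deferring (as you do) the verification of the adjoint operators to \S\ref{sec:adj_deriv_details}.
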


\begin{proof}
By Theorem~\ref{thm:rep}, 
	\begin{align*}
    (\Psi, E) &= \ofancy{N}_{EP}^*(\Phi, E) = \fancy{N}_{EP}(U, \Phi) -
    \fancy{N}_{EP}(U_h,\Phi) = (\vec{f},\vphi) - \fancy{N}_{EP}(U_h,\Phi).
	\end{align*}

	\end{proof}

\subsection{Non-homogeneous boundary conditions for the MHD system}
\label{sec:boundary_considerations}

The analysis above easily extends to the case of non-homogeneous boundary
conditions, i.e. when $\vec{g}$ or $\vec{q}$ are not identically zero. First
assume that the numerical solution  $U_h$ the satisfies the non-homogeneous
conditions exactly. That is, $\vec{u} = \vec{u}_h = \vec{g}$ and $\vmag \times
\vec{n} = \vmag_h \times \vec{n} = \vec{q}\times\vec{n}$ on $\partial \Omega$.
Then, although neither the true solution $U$ nor the numerical solution $U_h$
belong to $\EPspace$, the error $E$ defined in Definition \ref{defn:err}
satisfies homogeneous boundary conditions and hence belongs to $\EPspace$.
Thus, the error analysis in the previous section applies directly in this case.

On the other hand, if $U_h$ belongs to $\EPspaceh\setminus\EPspace$, then in
general $U_h$ does not satisfy the non-homogeneous boundary conditions exactly.
Hence we consider the splitting of the numerical solutions as,
\begin{equation}
\label{eq:num_soln_decomp}
U_h = U_h^0 + U^d,
\end{equation}
where $U_h^0 \in \EPspaceh$ solves,
\begin{equation}
\mathcal{N}_{EP}(U_h,V_h) = {\mathcal{N}}_{EP}(U_h^0 + U^d,V_h) = (F,V_h),\quad\forall\,V_h\in 	\EPspaceh,
\end{equation}
and $U^d$ is a known function that satisfies the non-homogeneous boundary
conditions accurately. That is, the unknown is now $U_h^0$ and the numerical
solution $U_h$ is formed through the sum in \eqref{eq:num_soln_decomp}. In this
article the function $U^d$ is approximated through a finite element space of
much higher dimension than $\EPspaceh$ to capture the boundary conditions
accurately and hence minimize discretization error. An alternate approach is to
represent $U^d$ in the same space as $U^0_h$ and then quantify the error due to
this approximation, for example  see \cite{chaudhry_2018}.

\subsection{Error estimate and contributions}
\label{sec:err_est_from_err_rep}
The error representation in Theorem~\ref{thm:err-rep-ep} requires the exact
solution $\Phi=(\vphi, \vbeta, \pi) \in \EPspace$ of
\eqref{eq:EP-dual-problem}.  Moreover, the adjoint form
\eqref{eq:EP-weak-formula} is linearized around  the true solution $U$ and the
approximate solution $U_h$. In practice, the adjoint solution itself must be
approximated in a finite element space $\mathcal{W}^h \subset \EPspace$ and is
linearized only around the numerical solution.  Let this approximation to the
adjoint be denoted by $\Phi_h=(\vphi_h, \vbeta_h, \pi_h) \in \mathcal{W}^h$.
This approximation leads to an error estimate from the error representation in
Theorem \ref{thm:err-rep-ep}. Let this error estimate be denoted by $\eta$.
That is, $\eta \approx (\Psi, E)$ such that,
\begin{equation}
   \label{eq:mhd_err_est}
		\eta = E_{mom} + E_{con} + E_{M},
	\end{equation}
	where,
	\begin{equation}
	\label{eq:err_contribs}
	\begin{aligned}
	E_{mom}&=(\vec{f},\vphi_h) - \bigg(\frac{1}{\Re}(\nabla\vec{u}_h,\nabla\vphi_h)+ ((\vec{u}_h\cdot\nabla)\vec{u}_h,\vphi_h)
	- (p_h, \diver\vphi_h)\\
	&+ \kappa((\curl\vmag_h)\times\vmag_h,\vphi_h)\bigg),\\
  E_{con}&=-(\diver\vec{u}_h,\pi_h),\\
  E_{M}&=-\frac{\kappa}{\Rm}(\curl\vmag_h,\curl\vbeta_h)+\kappa(\curl(\vec{u}_h\times\vmag_h),\vbeta_h)\\
	     &-\frac{\kappa}{\Rm}(\diver\vmag_h,\diver\vbeta_h).
	\end{aligned}
	\end{equation}
Here $E_{mom}$, $E_{con}$ and $E_{M}$ represent the momentum error
contribution, the continuity  error contribution and the magnetic error
contribution respectively.

To obtain an accurate error estimate we choose $\mathcal{W}^h$ to be of much
higher dimension than $\EPspaceh$ as is standard in adjoint based \textit{a
posteriori} error
estimation~\cite{Estep:Larson:00,Estep1995,Eriksson1995,CET+2016,CEG+2015,Estep:Larson:00,CBH+2014,CET-09,barth04}. 
Moreover, the inaccuracy caused by substituting the numerical solution in place
of true solution in the adjoint form is of higher order and shown to decrease in the
limit of refined discretization \cite{Estep:Larson:00,Cyr2014}.

\section{Numerical results}\label{sect:numerical-results}
In this section we present numerical results to verify the accuracy of the
error estimate \eqref{eq:mhd_err_est} and the and utility of the error
contributions in \eqref{eq:err_contribs}. 
The effectivity ratio, denoted Eff., characterizes how well the error estimate
approximates the true error, 
\begin{equation} 
\text{Eff.} =
\frac{\text{Error estimate}}{\text{True error}} = \frac{\eta}{(\Psi,E)}.\label{eq:eff-ratio}
\end{equation}
 The closer the effectivity is to 1, the better the
error estimate provided by our method.

We present two numerical examples here, the Hartmann problem in
\S\ref{sect:hartmann} which admits an analytic solution, and the magnetic lid
driven cavity \S\ref{sect:lid-driven}. Since there is no closed form solution
for the magnetic lid driven cavity, we use as reference a high order/fine mesh
solution to provide a high accuracy estimate for the true error. All the
following computations were carried out using  the finite element package
\texttt{Dolfin} in the \texttt{FEniCS} suite
\cite{AlnaesBlechta2015a,LoggMardalEtAl2012a,LoggWellsEtAl2012a}.

For all experiments, we chose different polynomial orders of Lagrange spaces
for the product space $\EPspaceh$ and choose the adjoint space $\mathcal{W}^h$
such that it is one higher polynomial degree in each variable.
 The computational domain for all problems is chosen to be a unit length
 square, $\Omega := [-\tfrac{1}{2}, \frac{1}{2}]^2\subset\R^2$. The mesh is
 a simplicial uniform mesh with the total number of elements denoted by $\#
 Elements$.

\subsection{Hartmann flow in two dimensions}\label{sect:hartmann}
Our first results concern the so-called Hartmann problem
\cite{muller_ulrich_buhler_leo_2010}. This problem models the one-dimensional
flow of a conducting fluid in a channel and forms both a momentum boundary
layer (viscous boundary layer), and a layer formed by the diffusion of the
magnetic field that influences the flow due to the Lorentz force (a Hartmann
layer).  In this case we take consider a square channel as the computational
domain, however the analytic solution is only a one-dimensional profile, as
described in the beginning of the section. This problem admits an analytic
solution \cite{phillips_elman_cyr_shadid_pawlowski_2014},
$\vec{u}=\begin{bmatrix}u_x, 0\end{bmatrix}^T$, $\vmag=\begin{bmatrix}b_x,
1\end{bmatrix}^T,\, p$ where
\begin{subequations}
\begin{align}
u_x(y) &=\frac{G\,\Re(\cosh(\Ha/2) - \cosh(\Ha y))}{2\Ha\sinh(\Ha/2)},\\
B_x(y) &=\frac{G(\sinh(\Ha y) - 2\sinh(\Ha/2)y)}{2\kappa\sinh(\Ha/2)},\\
p(x) &= -Gx -\kappa B_x^2/2,
\end{align}\label{eq:hartmann}
\end{subequations}
and $G=-\frac{dp}{dx}$ is an arbitrary pressure drop that we choose to
normalize the maximum velocity $|u_x(y)|$ to 1.

\subsubsection{Problem parameters and QoI}
The values of the nondimensionalized constants are chosen as follows:
$\Re=16,\Rm=16, \kappa=1$ which produce a Hartmann number of $\Ha=16$. The QoI
is chosen as the average velocity across the flow over a slice. To this end,
define
\begin{equation}
\Omega_c:=\left[-\tfrac{1}{4},\tfrac{1}{2}\right]\times\left[-\tfrac{1}{4},\tfrac{1}{4}\right]
\end{equation}
and consequently $\mathbbm{1}_{\Omega_c}$ the characteristic function on
$\Omega_c$. We choose $\Psi$ to be
$\Psi=\begin{bmatrix}\mathbbm{1}_{\Omega_c}, 0, 0, 0, 0\end{bmatrix}^T$
so that the QoI \eqref{eq:QoI} thus reduces to
\begin{equation}
(\Psi, U) = (\mathbbm{1}_{\Omega_c}, u_x).
\end{equation}
This has a physical interpretation of the capturing the flow rate across this
slice of the channel, $\Omega_c$.

\subsubsection{Numerical results and discussion} 
The error contributions of \eqref{eq:mhd_err_est} as well as effectivity ratios using
different order polynomial spaces are presented in
Table \ref{tab:Hartmann-p2p1p1}, Table \ref{tab:Hartmann-p2p2p1-linerr},
Table \ref{tab:Hartmann-p3p2p2-linerr}, and
Table \ref{tab:Hartmann-p3p2p2-nolinerr}. The effectivity ratio in tables
Table \ref{tab:Hartmann-p2p1p1} and Table \ref{tab:Hartmann-p2p2p1-linerr} is quite
close to 1 indicating the accuracy of the error estimate. The error estimate in
Table \ref{tab:Hartmann-p3p2p2-linerr} is not as accurate due to linearization
error incurred by replacing the true solution by the approximate solution in
the definition of the adjoint as discussed in \S \ref{sec:err_est_from_err_rep}.
This may be verified by linearizing the adjoint weak form around both the true
(which we know for this example) and the approximate solutions.  These results
are shown in Table \ref{tab:Hartmann-p3p2p2-nolinerr} and now the error estimate
is again accurate. 

In Table \ref{tab:Hartmann-p2p1p1} we use the lowest order tuple of Lagrange
spaces, $(\P^2, \P^1, \P^1)$ for the variables $(\vec{u},\vmag,p)$. In this
case, the error is largely dominated by the contributions $E_{con}$ and $E_M$.
We greatly reduce the error in $E_M$ by using a higher degree Lagrange space,
$\P^2$, for $\vmag$ as demonstrated in table
Table \ref{tab:Hartmann-p2p2p1-linerr}. However, this does not reduce the
magnitude of the total error much (about $5\%$) which is still dominated by the
contribution $E_{con}$. The contribution $E_{con}$ is not significantly
affected by the finite dimensional space for $\vmag$. Now finally, in
Table \ref{tab:Hartmann-p3p2p2-linerr} we use a higher order tuple $(\P^3, \P^2,
\P^2)$ for $(\vec{u},\vmag, p)$ and the total error drops by two orders of
magnitude. 

\begin{table}[!ht]
	\centering
	\begin{tabular}{|c|c|c|c|c|c|c||}
	\hline
	\# Elements&  True Error & Eff. & $E_{mom}$ & $E_{con}$ & $E_M$ \\
	\hline
	1600 & 2.76e-04 & 1.00 & 4.53e-06 & -2.28e-04 &5.00e-04\\
  \hline
  6400 & 6.98e-05 & 1.00 & 1.29e-06 & -6.23e-05 &1.31e-04\\
  \hline
  14400 & 3.11e-05 & 1.00 & 6.05e-07 & -2.86e-05 &5.91e-05\\
  \hline
  25600 & 1.75e-05 & 1.00 & 3.49e-07 & -1.63e-05 &3.35e-05\\
  \hline
	\end{tabular}
\caption{Error in $(u_x, \mathbbm{1}_{\Omega_c})$ for the Hartmann problem \S\ref{sect:hartmann}, with $\mathbbm{1}_{\Omega_c} = [-\tfrac{1}{4}, \tfrac{1}{2}]\times [-\tfrac{1}{4},\tfrac{1}{4}]$. The finite dimensional space here is $(\P^2, \P^1,\P^1)$  for $(\vec{u},\vmag, p)$.}
\label{tab:Hartmann-p2p1p1}
\end{table}
\begin{table}[!ht]
	\centering
	\begin{tabular}{|c|c|c|c|c|c|c|c||}
    \hline
    \# Elements& True Error & Eff. & $E_{mom}$ & $E_{con}$ & $E_M$ \\
    \hline
    1600 & -2.25e-04 & 1.02 & 1.08e-06 & -2.27e-04 &-4.79e-06\\
    \hline
    6400 & -6.13e-05 & 1.04 & 1.04e-06 & -6.23e-05 &-2.18e-06\\
    \hline
    14400 & -2.81e-05 & 1.04 & 5.98e-07 & -2.86e-05 &-1.13e-06\\
    \hline
    25600 & -1.60e-05 & 1.04 & 3.76e-07 & -1.64e-05 &-6.81e-07\\
    \hline
	\end{tabular}
\caption{Error in $(u_x, \mathbbm{1}_{\Omega_c})$ for the Hartmann problem \S\ref{sect:hartmann}. The finite dimensional space here is $(\P^2, \P^2,\P^1)$ for $(\vec{u},\vmag, p)$.}
\label{tab:Hartmann-p2p2p1-linerr}
\end{table}
\begin{table}[!ht]
	\centering
	\begin{tabular}{|c|c|c|c|c|c|c|c||}
	\hline
	\# Elements& True Error & Eff. & $E_{mom}$ & $E_{con}$ & $E_M$ \\
	\hline
	1600 & 1.23e-06 & 1.21 & 3.97e-07 & -4.15e-06 &5.24e-06\\
  \hline
  6400 & 1.46e-07 & 1.47 & 9.23e-08 & -5.07e-07 &6.29e-07\\
  \hline
  14400 & 4.97e-08 & 1.63 & 3.84e-08 & -1.40e-07 &1.83e-07\\
  \hline
  25600 & 2.47e-08 & 1.73 & 2.07e-08 & -5.44e-08 &7.64e-08\\
  \hline
	\end{tabular}
\caption{Error in $(u_x, \mathbbm{1}_{\Omega_c})$ for the Hartmann problem
\S\ref{sect:hartmann}. The finite dimensional space here is $(\P^3,
\P^2,\P^2)$  for $(\vec{u},\vmag, p)$.  Here, we approximate the true
solution with the computed solution which results in linearization error. For
this accurate a solution, this deteriorates the quality of the estimate which
in turn results in a efficiency further from 1.  This is confirmed in
Table \ref{tab:Hartmann-p3p2p2-nolinerr} where we use the true solution and the
effectivity is again close to 1.}
\label{tab:Hartmann-p3p2p2-linerr}
\end{table}
\begin{table}[!ht]
	\centering
	\begin{tabular}{|c|c|c|c|c|c|c|c||}
	\hline
	2d Elem.& True Error & Eff. & $E_{mom}$ & $E_{con}$ & $E_M$ \\
	\hline
  1600 & 1.23e-06 & 1.00 & 2.75e-07 & -4.39e-06 &5.34e-06\\
  \hline
  6400 & 1.46e-07 & 1.00 & 5.97e-08 & -5.60e-07 &6.46e-07\\
  \hline
  14400 & 4.97e-08 & 1.00 & 2.35e-08 & -1.63e-07 &1.89e-07\\
  \hline
  25600 & 2.47e-08 & 1.00 & 1.22e-08 & -6.65e-08 &7.90e-08\\
  \hline
	\end{tabular}
\caption{Error in $(u_x, \mathbbm{1}_{\Omega_c})$ for the Hartmann problem, \S\ref{sect:hartmann}. The finite dimensional space here is $(\P^3, \P^2,\P^2)$  for $(\vec{u},\vmag, p)$.
 No linearization error is present here because we use the true solution in the definition of the adjoint. }
\label{tab:Hartmann-p3p2p2-nolinerr}
\end{table}

\subsection{Magnetic Lid Driven Cavity}\label{sect:lid-driven}
\subsubsection{Regularization and solution method}
The magnetic lid driven cavity is another common benchmark problem for verifying
MHD codes \cite{phillips_elman_cyr_shadid_pawlowski_2014, Sivasankaran2011}.
However, the standard lid velocity is discontinuous and therefore obtains at most
$H^{1/2-\epsilon}$ regularity in two dimensions with $\epsilon>0$. By the converse of the
trace theorem and the Sobolev inequality \cite{ern_guermond_2011,
brenner_scott_2011}, the solution $u_x$ cannot obtain $H^1$ regularity on the
interior. Indeed, in this situation, we do not even have well-posedness of the
primal problem, so there is not real hope for error analysis. This issue has
been address in a purely fluid context \cite{Hamouda2017,
lee_dowell_balajewicz_2018}. In both cases, a regularization of the lid
velocity is proposed to mitigate theoretical issues (in the former) and the
ability to achieve higher Reynold's numbers (in the latter). In this work, we
use a similar regularization to the one proposed in
\cite{lee_dowell_balajewicz_2018}, a polynomial regularization of the lid
velocity,
\[u_{top}(x)=C\left(x-\tfrac{1}{2}\right)^2\left(x+\tfrac{1}{2}\right)^2,\]
with $C$ chosen such that 
\[\int_{-1/2}^{1/2}u_{top}(x)\d x = 1.\] 
The boundary conditions are imposed as $\vec{g}(x,
0.5)=\begin{bmatrix}u_{top},0\end{bmatrix}^T$ on the top face and zero on the
rest of the boundary. The boundary conditions for the magnetic field are
$\vec{q}=\begin{bmatrix}-1,0\end{bmatrix}^T$ so that
$\vec{b}\times\vec{n}=\begin{bmatrix}-1,0\end{bmatrix}^T\times\vec{n}$ on
$\p\Omega$.  To get a qualitative measure of the validity of the regularized
problem, we show plot of the velocity profile for a fixed Reynold's number
$\Re=5000$ and varying magnetic Reynold's numbers $\Rm$ in Figure
\ref{fig:liddriven}.  These plots are qualitatively similar to Figure 1 in
\cite{phillips_elman_cyr_shadid_pawlowski_2014} (for which an un-regularized
lid velocity is used), which gives a good indication that the regularized
version produces qualitatively similar features.
\begin{figure}[!ht]
    \centering
		\subfloat[$\Rm=0.1$]{\includegraphics[height=0.285\textwidth]{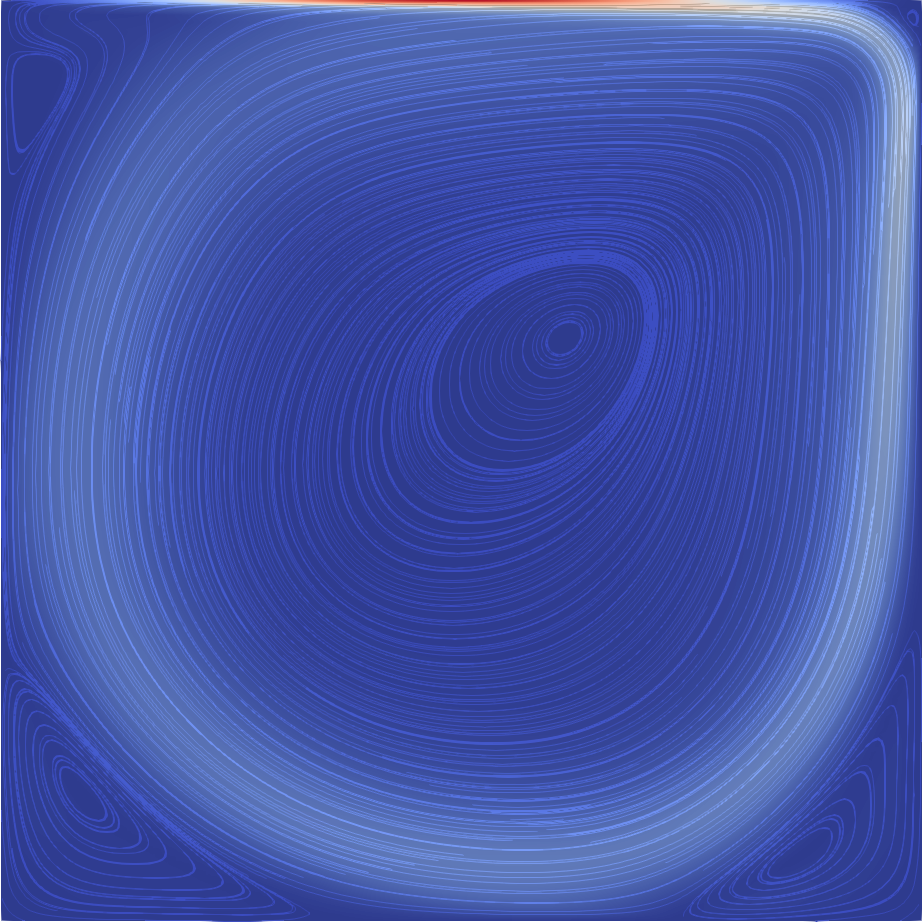}}
		\phantom{-}
		\subfloat[$\Rm=0.5$]{\includegraphics[height=0.285\textwidth]{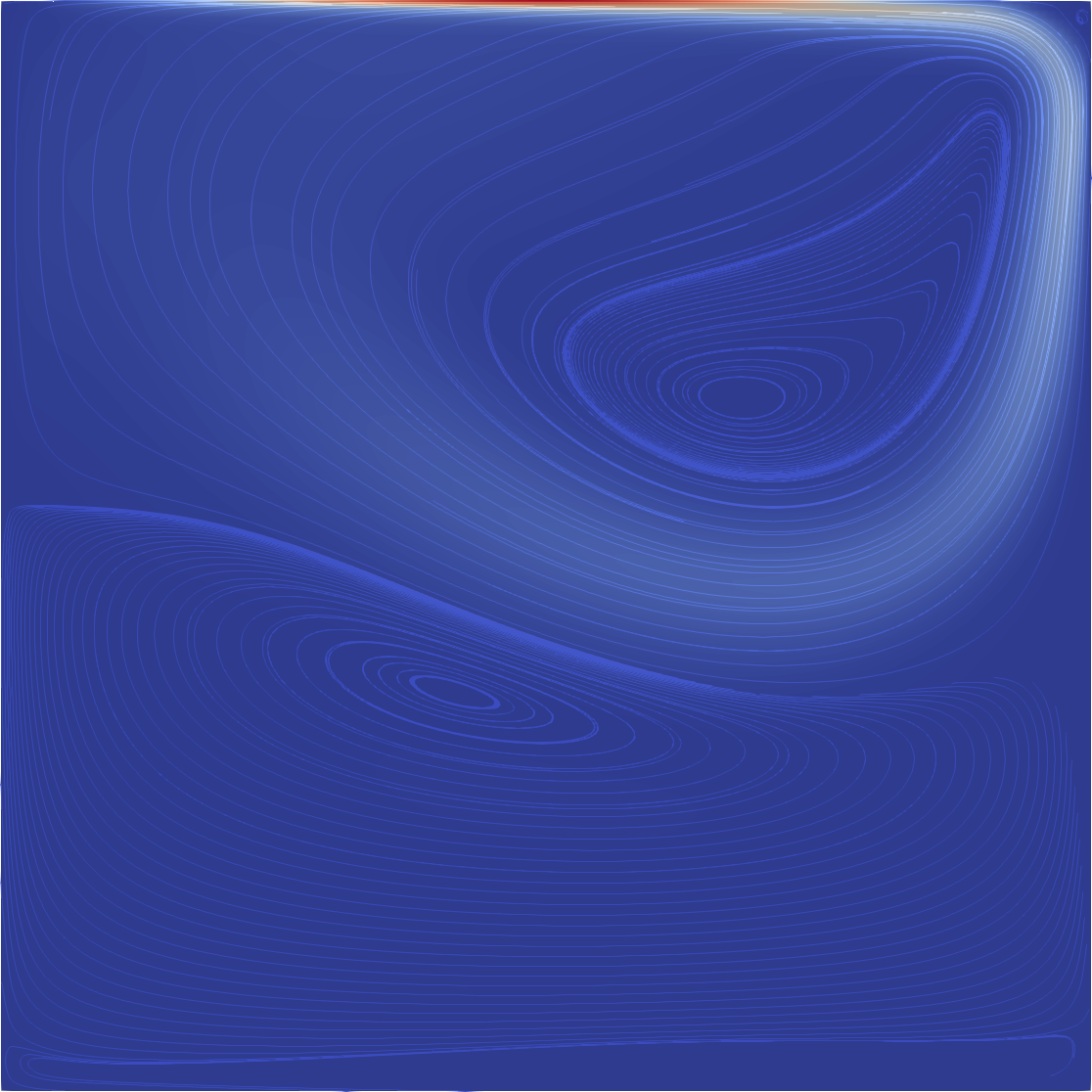}}
		\phantom{-}
		\subfloat[$\Rm=5.0$]{\includegraphics[height=0.285\textwidth]{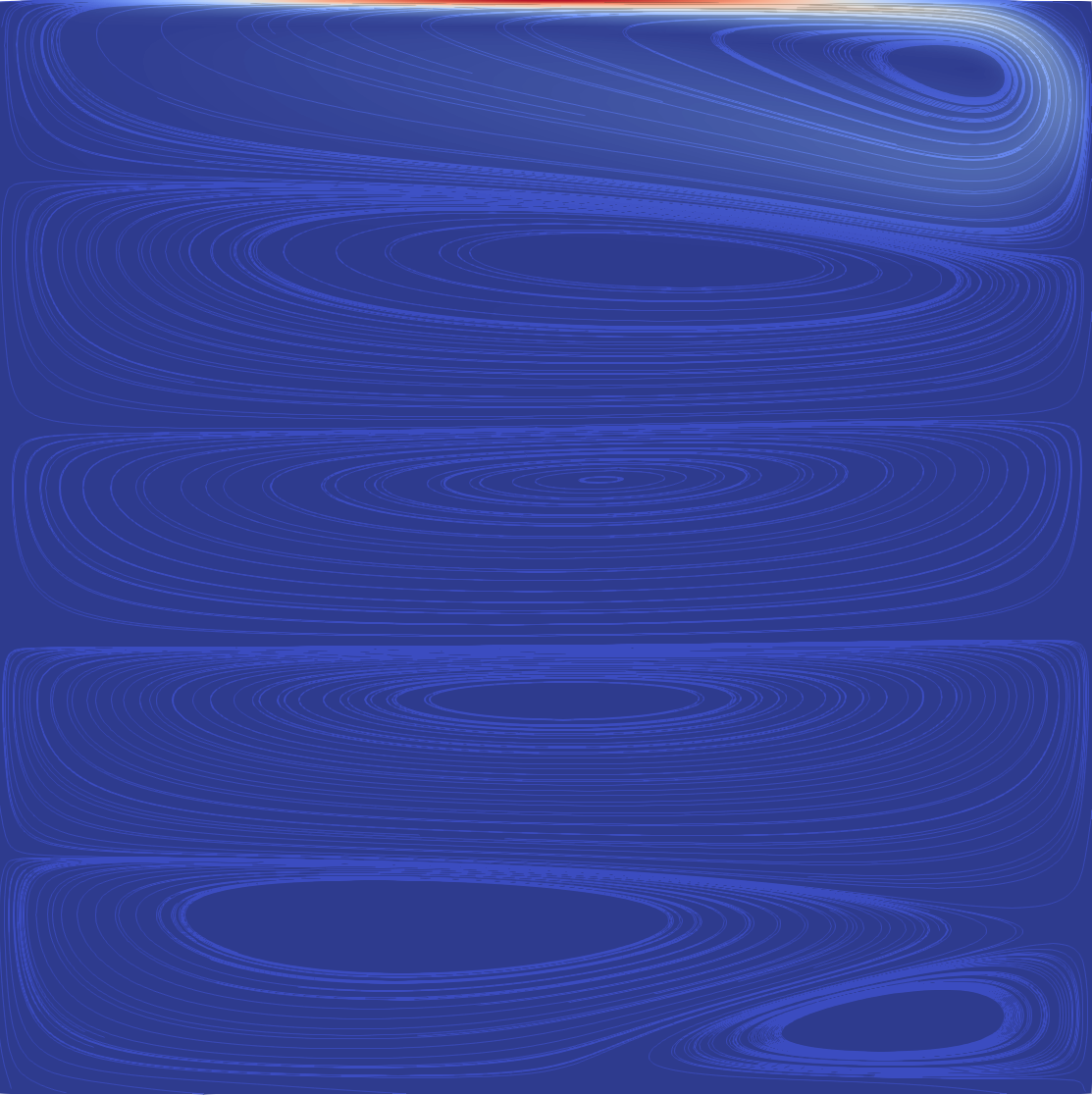}}
		\subfloat[]{\includegraphics[height=0.285\textwidth]{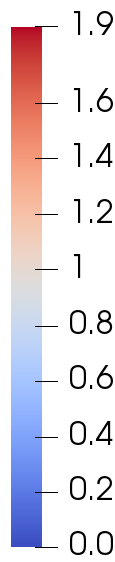}}		
    \caption{Plots of the $\|\vec{u}\|_{\R^d}$ for the lid driven cavity
    \S\ref{sect:lid-driven} with added streamlines.  We use a normalization on
    the lid velocity over a variety of magnetic Reynold's numbers, $\Rm$.  The
    other nondimensionalized parameters $\Re=5000, \kappa=1$ for all of these
    plots.}
	 \label{fig:liddriven}
\end{figure}

Furthermore, since Newton's method requires a good initial guess for this
problem, we use a homotopic sequence of initial guesses to achieve convergence
to high $\Re$. Specifically we run the problem for a moderate value of
$\Re=200$ for example, and then use the solution produced by the solver as the
initial guess for a larger value e.g. $\Re=1000$ until we have achieved the
desired value.  Figure \ref{fig:homotopy} shows the intermediate values in this
sequence to solve a problem with $\Re=1000$.
\begin{figure}[!ht]
    \centering
		\subfloat{\includegraphics[height=0.27\textwidth]{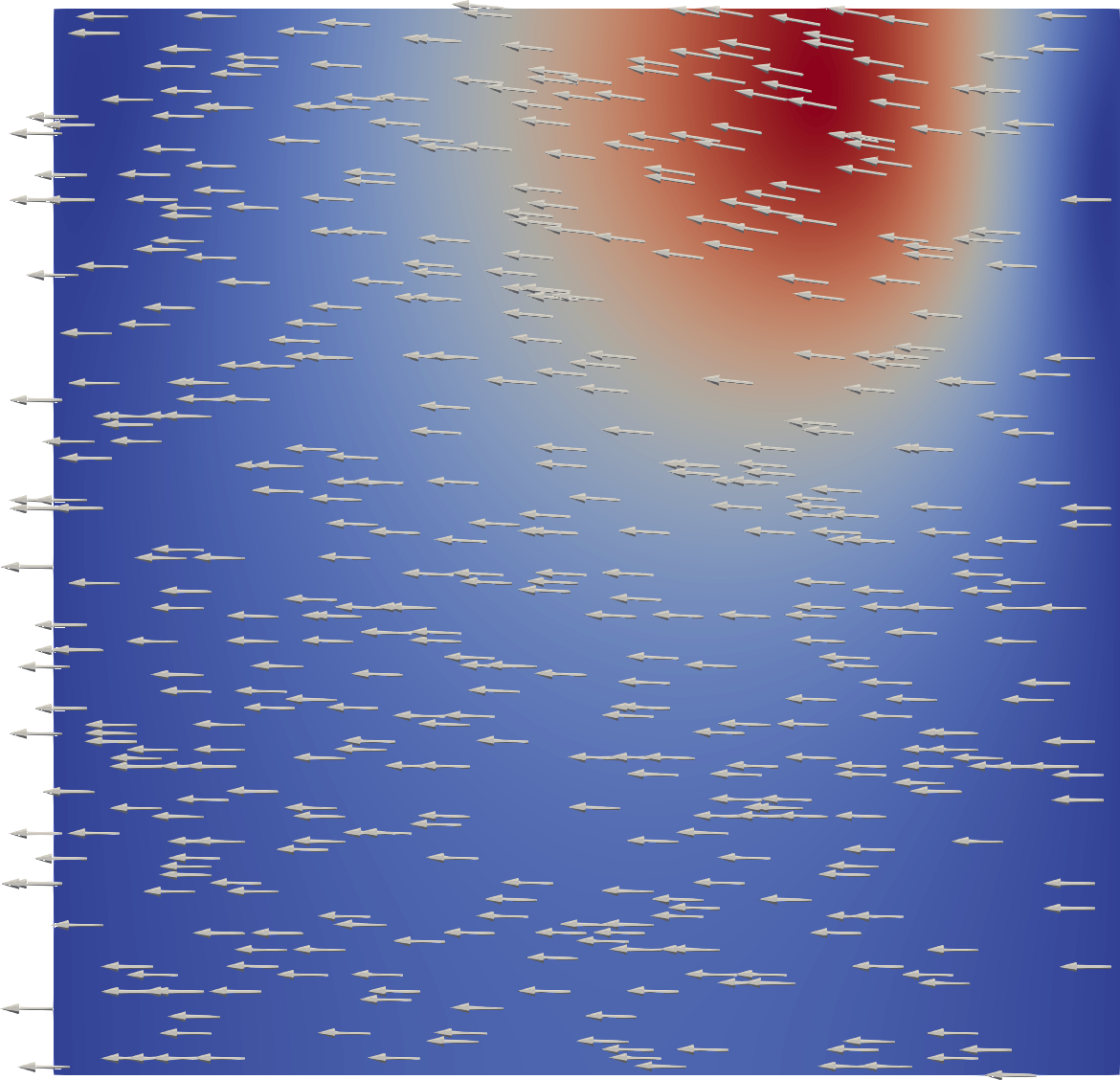}}
		\phantom{'}
		\subfloat{\includegraphics[height=0.27\textwidth]{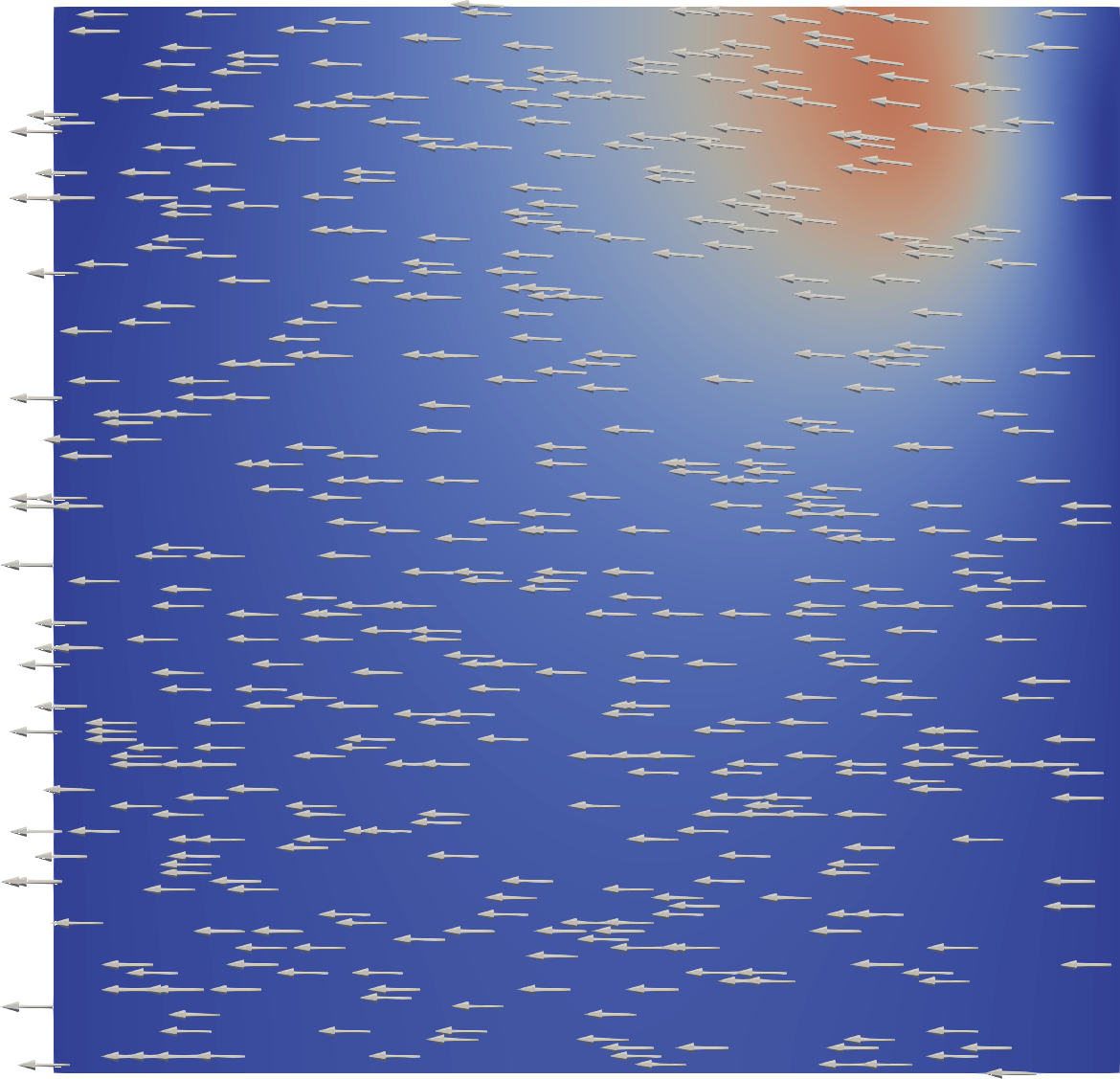}}
		\phantom{'}
		\subfloat{\includegraphics[height=0.27\textwidth]{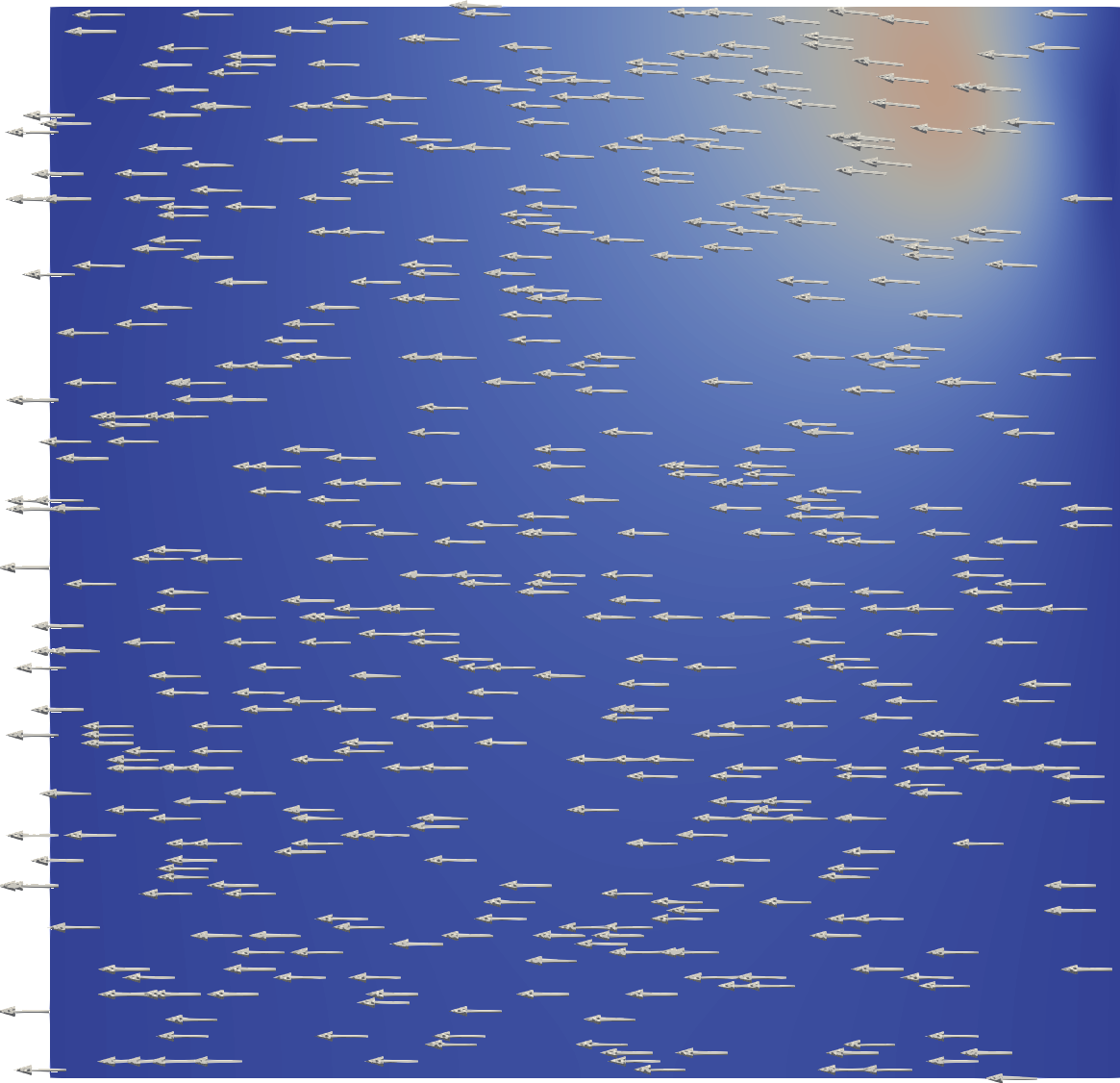}}
		\subfloat[]{\includegraphics[height=0.27\textwidth]{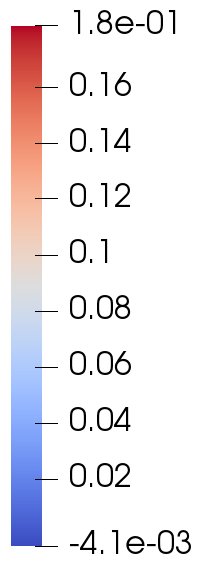}}	\\
		\subfloat[$\Re=200$]{\includegraphics[height=0.27\textwidth]{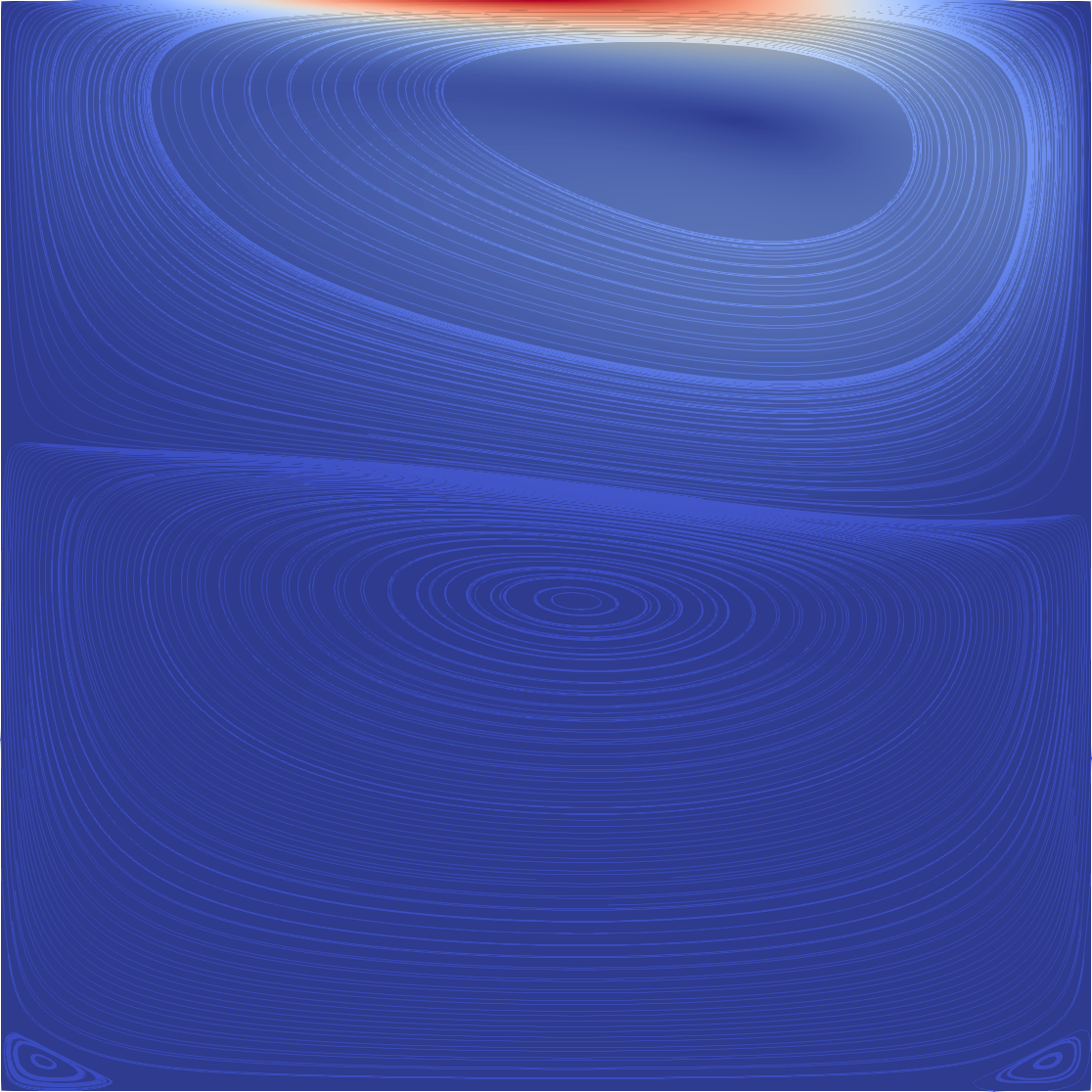}}
		\phantom{-}
		\subfloat[$\Re=500$]{\includegraphics[height=0.27\textwidth]{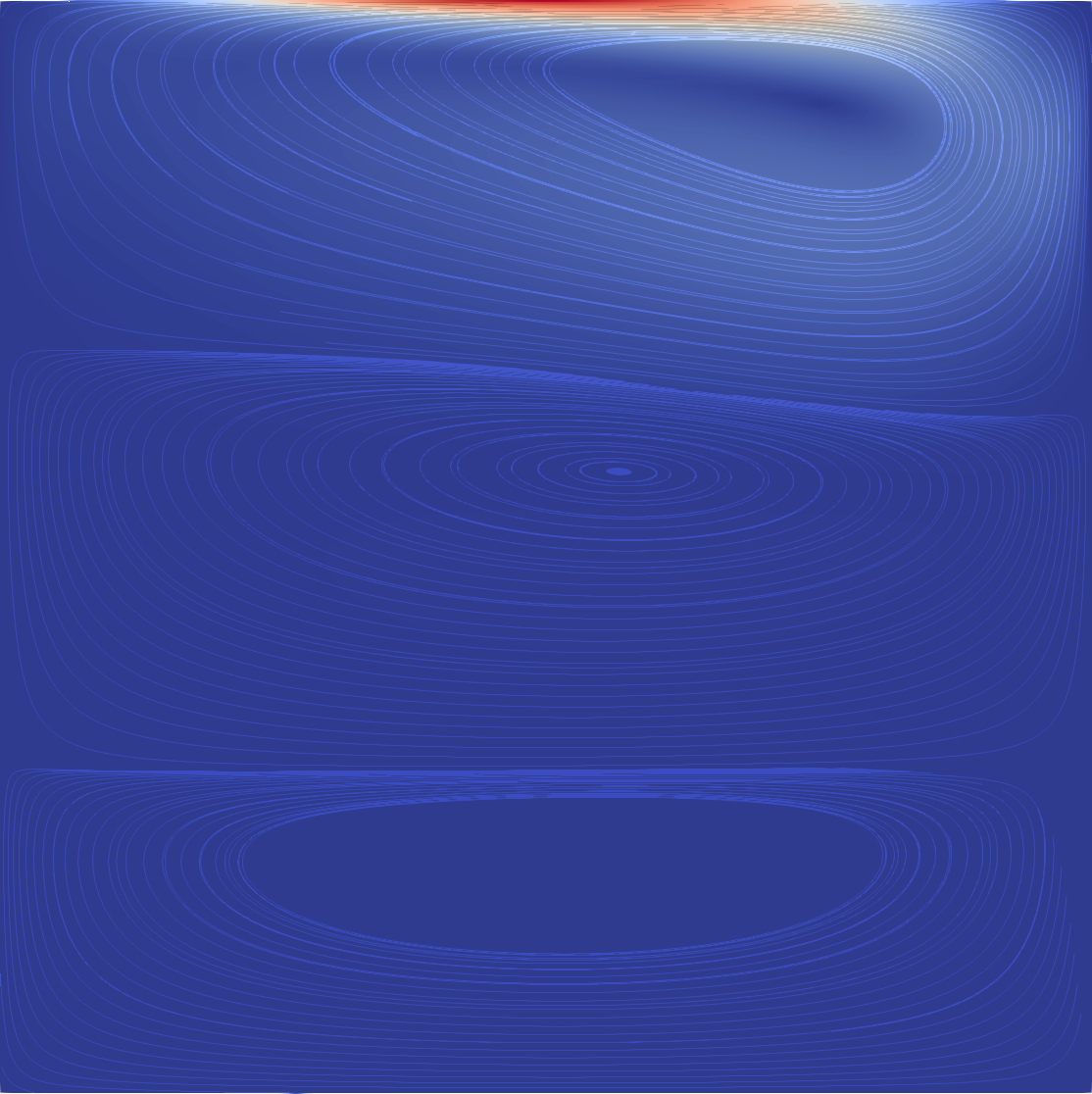}}
		\phantom{-}
		\subfloat[$\Re=1000$]{\includegraphics[height=0.27\textwidth]{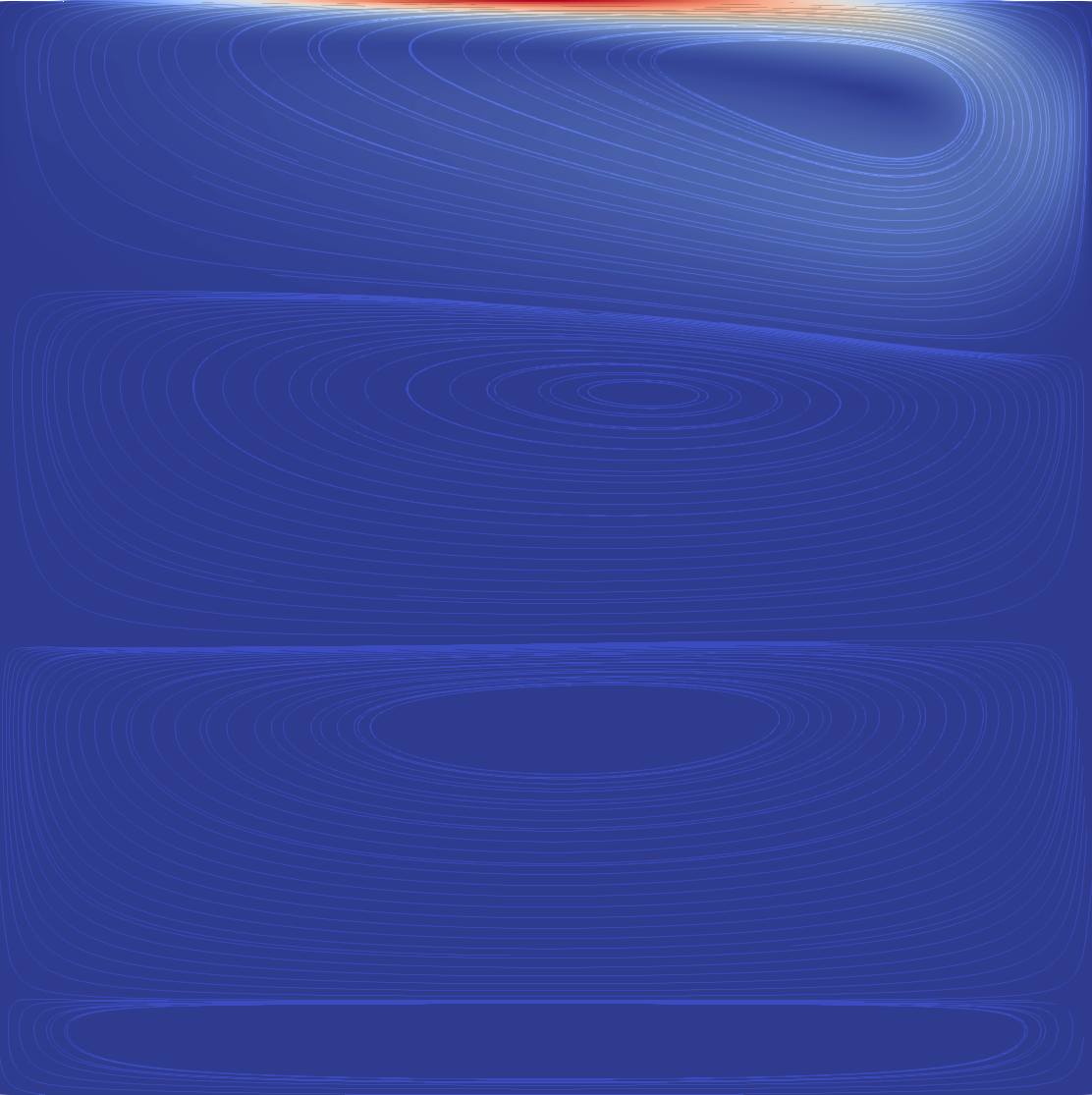}}
		\subfloat[]{\includegraphics[height=0.27\textwidth]{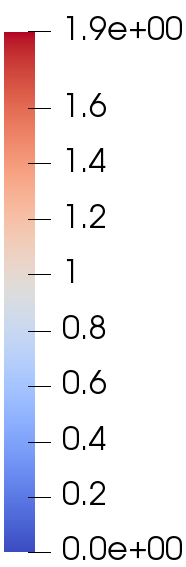}}\\
		\caption{Demonstrating the homotopy parameter strategy to achieve high
		fluid Reynold's numbers as described in \S\ref{sect:lid-driven}. The
		other nondimensionalized parameters $\Rm=5.0, \kappa=1$ for all of these
		plots. The top row is colored according the $b_y$ and with the arrows
		representing the vector $\vmag$. The bottom row is colored according to
    $\|\vec{u}\|_{\R^d}$, with added streamlines.}
	 \label{fig:homotopy}
\end{figure}

\subsubsection{Problem parameters and results}
We consider our QoI \eqref{eq:QoI} with $\Psi = \begin{bmatrix}0, 0, 0, \mathbbm{1}_{\Omega_c},
0\end{bmatrix}^T$ where now
\begin{equation}
\Omega_c:=\left[-\tfrac{1}{4},\tfrac{1}{4}\right]\times\left[0,\tfrac{1}{2}\right],
\end{equation}
so that the QoI $(\Psi, U) = (\mathbbm{1}_{\Omega_c}, b_y)$ gives a measure of the
induced magnetic field in the upper middle half of the box. See Figure \ref{fig:homotopy}
for plots of the induced field $b_y$ as a function of Reynold's number
$\Re$.

Since there is no analytic solution for this problem, we compute solution on
a $400\times 400$ mesh in the space $(\P^3, \P^2, \P^2)$ for $(\vec{u}, \vmag,
p)$.  We consider the QoI obtained from this very high resolution reference
solution as a the true solution to  compute the error in the denominator of the
effectivity ratio \eqref{eq:eff-ratio}.  The effectivity ratio and error
contributions for $\Re=1000$ and $\Re=2000$ are shown in
Tables~\ref{tab:lid_driven-p2p1p1-Re-1000},
\ref{tab:lid_driven-p3p1p2-Re-1000}, \ref{tab:lid_driven-p2p1p1-Re-2000} and
\ref{tab:lid_driven-p3p1p2-Re-2000}.  The error estimate $\eta$ is deemed
accurate since all effectivity ratios are close to 1.

We first study the lowest order case, namely using the space $(\P^2, \P^1,
\P^1)$ for $(\vec{u}, \vmag, p)$ in Table \ref{tab:lid_driven-p2p1p1-Re-1000}
where $\Re=1000$ and Table \ref{tab:lid_driven-p2p1p1-Re-2000} where $\Re=2000$.
For both $\Re=2000$ and $\Re=1000$, the error contributions are not drastically
different in magnitude, and become even more similar as the mesh is refined. We
also note that all contributions, and in particular the true error, are larger
in magnitude for the case $\Re=2000$. 

For the next experiment, we consider a higher order space for the velocity pair
$(\vec{u}, p)$ namely $(\P^3, \P^1, \P^2)$ for $(\vec{u},\vmag,p)$ in Table
\ref{tab:lid_driven-p3p1p2-Re-1000} for $\Re=1000$ and Table
\ref{tab:lid_driven-p3p1p2-Re-2000} for $\Re=2000$. In both cases, the error is
now dominated by the contribution $E_M$. The case of $\Re=2000$ is particularly
interesting, as the error increases as the mesh is refined from
1600 elements to 3600 elements. This seemingly anomalous behavior is explain by
examining the error contributions. For $\# Elements = 1600$ we have that
$E_{mom}+E_{con}$ has magnitude comparable to that of $E_M$ but opposite
sign, and hence there is cancellation of error.  For $\# Elements = 3600$,
the magnitude of $E_{mom}+E_{con}$ is much less than that of $E_M$ and
hence the total error increases as there is less cancellation of error.
Hence, adjoint based analysis not only quantifies the error, it also helps
in diagnosing such anomalous behavior.
\begin{table}[!ht]
	\centering
	\begin{tabular}{|c|c|c|c|c|c|c|c||}
	\hline
	\# Elements& True Error & Eff. & $E_{mom}$ & $E_{con}$ & $E_M$ \\
		\hline
		1600 & -3.93e-05 & 0.99 & -1.05e-05 & -2.47e-05 &-3.78e-06\\
		\hline
		3600 & -9.50e-06 & 0.97 & -2.23e-06 & -5.23e-06 &-1.74e-06\\
		\hline
		6400 & -3.41e-06 & 0.98 & -8.12e-07 & -1.52e-06 &-9.87e-07\\
		\hline
		10000 & -1.61e-06 & 0.98 & -3.64e-07 & -5.81e-07 &-6.33e-07\\
		\hline
 	\end{tabular}
\caption{Error estimates for $(b_y, \mathbbm{1}_{\Omega_c})$ for the lid driven cavity
\S\ref{sect:lid-driven}. The finite dimensional space here is
$(\P^2, \P^1,\P^1)$ for $(\vec{u},\vmag, p)$. We
use a very high resolution reference solution on a 400x400=160000 element mesh and $(\P^3, \P^2,
\P^2)$ elements. The parameters are $\Re=1000,\Rm=0.4,\kappa=1$.}
\label{tab:lid_driven-p2p1p1-Re-1000}
\end{table}

\begin{table}[!ht]
	\centering
	\begin{tabular}{|c|c|c|c|c|c|c||}
	\hline
	\# Elements&  True Error & Eff. & $E_{mom}$ & $E_{con}$ & $E_M$ \\
	\hline
	1600 & -5.37e-06 & 0.98 & -4.65e-07 & -9.75e-07 &-3.81e-06\\
	\hline
	3600 & -1.95e-06 & 0.99 & -5.49e-08 & -1.27e-07 &-1.75e-06\\
	\hline
	6400 & -1.03e-06 & 1.00 & -1.06e-08 & -2.76e-08 &-9.87e-07\\
	\hline
	10000 & -6.45e-07 & 1.00 & -2.89e-09 & -8.04e-09 &-6.33e-07\\
	\hline
 	\end{tabular}
\caption{Error estimates for $(b_y, \mathbbm{1}_{\Omega_c})$ for the lid driven cavity
\S\ref{sect:lid-driven}. The finite dimensional space here is
$(\P^2, \P^2,\P^1)$ for $(\vec{u},\vmag, p)$. We
use a very high resolution reference solution on a 400x400=160000 element mesh and $(\P^3, \P^2,
\P^2)$ elements. The parameters are $\Re=1000,\Rm=0.4,\kappa=1$.}
\label{tab:lid_driven-p3p1p2-Re-1000}
\end{table}

\begin{table}[!ht]
	\centering
	\begin{tabular}{|c|c|c|c|c|c|c|c||}
	\hline
	\# Elements& True Error & Eff. & $E_{mom}$ & $E_{con}$ & $E_M$ \\
	\hline
	1600 & -8.01e-05 & 1.10 & -3.65e-05 & -5.70e-05 &5.63e-06\\
	\hline
	3600 & -2.04e-05 & 0.98 & -5.69e-06 & -1.66e-05 &2.25e-06\\
	\hline
	6400 & -5.92e-06 & 0.96 & -1.84e-06 & -5.06e-06 &1.19e-06\\
	\hline
	10000 & -2.07e-06 & 0.96 & -8.17e-07 & -1.91e-06 &7.41e-07\\
	\hline
 	\end{tabular}
\caption{Error estimates for $(b_y, \mathbbm{1}_{\Omega_c})$ for the lid driven cavity
\S\ref{sect:lid-driven}. The finite dimensional space here is
$(\P^2, \P^1,\P^1)$ for $(\vec{u},\vmag, p)$. We
use a very high resolution reference solution on a 400x400=160000 element mesh and $(\P^3, \P^2,
\P^2)$ elements. The parameters are $\Re=2000,\Rm=0.4,\kappa=1$.}
\label{tab:lid_driven-p2p1p1-Re-2000}
\end{table}

\begin{table}[!ht]
	\centering
	\begin{tabular}{|c|c|c|c|c|c|c|c||}
	\hline
	\# Elements& True Error & Eff. & $E_{mom}$ & $E_{con}$ & $E_M$ \\
	\hline
	1600 & 1.31e-06 & 0.78 & -1.58e-06 & -3.47e-06 &6.08e-06\\
	\hline
	3600 & 1.51e-06 & 0.96 & -1.91e-07 & -5.29e-07 &2.17e-06\\
	\hline
	6400 & 1.02e-06 & 0.98 & -3.87e-08 & -1.28e-07 &1.17e-06\\
	\hline
	10000 & 6.94e-07 & 0.99 & -1.07e-08 & -4.04e-08 &7.38e-07\\
	\hline
	\end{tabular}
\caption{Error estimates for $(b_y, \mathbbm{1}_{\Omega_c})$ for the lid driven cavity
\S\ref{sect:lid-driven}. The finite dimensional space here is
$(\P^3, \P^2,\P^1)$ for $(\vec{u},\vmag, p)$. We
use an very high resolution reference solution on a 400x400=160000 element mesh and $(\P^3, \P^2,
\P^2)$ elements. The parameters are $\Re=2000,\Rm=0.4,\kappa=1$.}
\label{tab:lid_driven-p3p1p2-Re-2000}
\end{table}

\subsection{Illustrative compute time comparison of the primal and adjoint problems}
  In this section we study CPU times for the Hartmann problem of
  \S\ref{sect:hartmann} using $(\P^2,\P^1,\P^1)$ for $(\vec{u},\vmag, p)$. In
  particular this corresponds to the experiment in Table
  \ref{tab:Hartmann-p2p1p1}. We compare the CPU time of numerically
  solving the adjoint problem with with solving the discrete forward problem
  \eqref{eq:primal-FE}. The adjoint problem is solved in a higher order space
  $(\P^3,\P^2,\P^2)$, but since it is linear, it is not obvious how it compares
  in terms of computational cost to the primal problem.  The CPU times
  are shown in Table \ref{tab:timings} \footnote{These experiments were carried
    out using a dual-socket workstation with two Intel Xeon E5-2687W v2 for
  a total of 16 physical cores and 32 threads.}. The CPU time required for the
  adjoint problem is less in all cases than the CPU time required for solving
  the primal problem. We note that these results depend on the choice of linear
  and nonlinear solvers and preconditioners; here we are simply using Newton's
  method and direct linear solvers for  the primal problems and direct linear
  solvers for the adjoint problems.
 \begin{table}
  \centering
  \begin{tabular}{| c | c | c |}
    \hline
    \# Elements & Primal solve time (s) & Adjoint solve time (s)\\
    \hline
    1600 & 0.73 & 0.45\\
    \hline
    6400 & 3.40 & 1.62\\
    \hline
    14400 & 6.28 & 4.09\\
    \hline
    25600 & 11.70 & 8.01\\
    \hline
  \end{tabular}
  \caption{CPU times for the primal problem (using $(\P^2,\P^1,\P^1)$) and adjoint problem (using $(\P^3,\P^2, \P^2)$) corresponding to the results in Table \ref{tab:Hartmann-p2p1p1}. }
  \label{tab:timings}
\end{table}

\section{Derivation of the weak adjoint and well-posedness}\label{sect:theoretical-results}
In this section we provide the details of computing the adjoint to exact
penalty weak form following the theory in \S\ref{sect:error-analysis-abs}.
Then we use a standard saddle point argument to demonstrate the well-posedness
of this new adjoint problem \eqref{eq:EP-dual-problem}. We take inspiration for
these proofs  from \cite{gunzburger_meir_peterson_1991}. To simplify notation
in this section, we define 
\begin{equation}
\begin{aligned}
&\vec{s}:=\vec{u}+\vec{u}_h, \qquad
&\vec{t}:=\vmag+\vmag_h.
\end{aligned}\label{eq:st-notation}
\end{equation}
Finally, we use the notation $\stackrel{(\cdot)}{=}$ and $\stackrel{(\cdot)}{\le}$ to denote
that the equality or inequality is justified by equation $(\cdot)$.

\subsection{Derivation of the weak form of the adjoint}
\label{sec:adj_deriv_details}
In this section we provide derivation for the  primal linearized operators
$\ofancy{J}_{21}^*=\ovfancy{Y}^*$, $\ofancy{J}_{11}^*=\ovfancy{Z}_{\vec{u}}^*$,
$\ofancy{J}_{12}^*=\ovfancy{Z}_{\vmag}^*$ and $\ofancy{J}_{31}^*=\vfancy{C}^*$
in  \eqref{eq:averaged-entries}.  We first compute the  primal linearized
operators, $\ovfancy{Y} = \ofancy{J}_{21}$, $\ovfancy{Z}_{\vec{u}} =
\ofancy{J}_{11}$,  $\ovfancy{Z}_{\vmag}=\ofancy{J}_{12}$ and
$\vfancy{C}=\ofancy{J}_{31}$, using \eqref{eq:averaged-jacobian} and then apply
\eqref{eq:basic_adoint_id} to compute the $\ofancy{J}_{ij}^*$s. We have from
\eqref{eq:averaged-jacobian} for  $\vmtestD\in\vec{H}_\tau^1(\Omega)$ and
$\vec{w}\in\vec{H}_0^1(\Omega)$,
\begin{align*}
\ovfancy{Y}\,\vmtestD&:=\int_0^1\frac{\p\vfancy{Y}}{\p\vmag}(s\vmag + (1-s)\vmag_h)\vmtestD\d s,\\
\ovfancy{Z}_{\vmag}\,\vmtestD&:=\int_0^1\frac{\p\vfancy{Z}}{\p\vmag}(s\vec{u} + (1-s)\vec{u}_h)\vmtestD\d s,\\
\ovfancy{Z}_{\vec{u}}\,\vec{w}&:=\int_0^1\frac{\p\vfancy{Z}}{\p\vec{u}}(s\vmag + (1-s)\vmag_h)\vec{w}\d s.
\end{align*}
To this end, we compute
\begin{equation}
\begin{aligned}
&\ovfancy{Y}\,\vmtestD=\int_0^1\frac{\p\vfancy{Y}}{\p\vmag}(s\vmag + (1-s)\vmag_h)\vmtestD\d s\\
&= \int_0^1\left[\curl(s\vmag+(1-s)\vmag_h)\right]\times\vmtestD + (\curl\vmtestD)\times(s\vmag+(1-s)\vmag_h)\d s\\
&=\frac{1}{2}\left[(\curl(\vmag_h+\vmag))\times\vmtestD+(\curl\vmtestD)\times(\vmag_h+\vmag)\right].
\end{aligned}\label{eq:ybar}
\end{equation}
Similarly, for the two $\ovfancy{Z}$ terms,
\begin{equation}
\begin{aligned}
&\ovfancy{Z}_{\vmag}\,\vmtestD=\int_0^1\frac{\p\vfancy{Z}}{\p\vmag}(s\vec{u} + (1-s)\vec{u}_h)\vmtestD\d s\\
&=\int_0^1\curl((s\vec{u}+ (1-s)\vec{u}_h)\times\vmtestD)\d s
=\frac{1}{2}\left[\curl((\vec{u}_h+\vec{u})\times\vmtestD)\right].
\end{aligned}\label{eq:Zu-bar}
\end{equation}
An identical procedure produces,
\begin{equation}
\ovfancy{Z}_{\vec{u}}\,\vec{w}=\frac{1}{2}\left[\curl(\vec{w}\times(\vmag+\vmag_h))\right].
\end{equation}
Now, to find the adjoints of these operators, we use \eqref{eq:basic_adoint_id}, which in our case involves multiplying by a test function and
then isolating the trial function using integration by parts. 
We  also make use of the vector identities in Appendix
\ref{append:vect-ind}.  

We are now prepared to compute the adjoint for
$\ovfancy{Y}$. 
Integrating \eqref{eq:ybar} 
against $\vec{v}\in\vec{H}_0^1(\Omega)$, 
\begin{align*}
&(\ovfancy{Y}\,\vmtestD,\vec{v})
=\frac{1}{2}\int_\Omega\left[(\curl\vec{t})\times\vmtestD+(\curl\vmtestD)\times\vec{t}\right]\cdot\vec{v}\d x\\
&\stackrel{\eqref{piped}}{=}\frac{1}{2}\int_\Omega\vmtestD\cdot\left[\vec{v}\times(\curl\vec{t})\right]
+(\curl\vmtestD)\cdot\left[\vec{t}\times\vec{v}\right]\d x\\
&\stackrel{\eqref{div-cross-int}}{=}\frac{1}{2}\int_\Omega-\vmtestD\cdot\left[(\curl\vec{t})\times\vec{v}\right]
+ \vmtestD\cdot\left[\curl(\vec{t}\times\vec{v})\right]\d x
- \frac{1}{2}\int_{\p\Omega}\vmtestD\cdot\left[(\vec{t}\times\vec{v})\times\vec{n}\right]\d s\\
&\stackrel{\eqref{piped}}{=}\frac{1}{2}\int_\Omega-\vmtestD\cdot\left[(\curl\vec{t})\times\vec{v}\right]
+ \vmtestD\cdot\left[\curl(\vec{t}\times\vec{v})\right]\d x
+ \frac{1}{2}\int_{\p\Omega}(\vec{t}\times\vec{v})\cdot\left[\vmtestD\times\vec{n}\right]\d s \\
&\stackrel{\eqref{eq:H-tau-one}}{=} \frac{1}{2}\int_\Omega-\vmtestD\cdot\left[(\curl\vec{t})\times\vec{v}\right]
+ \vmtestD\cdot\left[\curl(\vec{t}\times\vec{v})\right]\d x
\stackrel{\eqref{eq:averaged-entries}}{=}(\vmtestD,\ovfancy{Y}^*\vec{v}).
\end{align*}
We proceed with computing the adjoint for $\ovfancy{Z}_{\vec{u}}$, with
$\vec{c}\in\vec{H}_\tau^1(\Omega)$,
\begin{align*}
&(\ovfancy{Z}_{\vec{u}}\,\vec{w},\vmtestC) = \frac{1}{2}\left(\curl(\vec{w}\times\vec{t}),\vmtestC\right)\\
&\stackrel{\eqref{div-cross-int}}{=}\frac{1}{2}\int_\Omega(\vec{w}\times\vec{t})\cdot(\curl\vmtestC)\d x
- \frac{1}{2}\int_{\p\Omega}(\vec{w}\times\vec{t})\cdot(\vmtestC\times\vec{n})\d s\\
&\stackrel{\eqref{piped}}{=}\frac{1}{2}\int_\Omega\vec{w}\cdot\left[\vec{t}\times(\curl\vmtestC)\right]\d x
- \frac{1}{2}\int_{\p\Omega}(\vec{w}\times\vec{t})\cdot(\vmtestC\times\vec{n})\d s\\
&\stackrel{\eqref{eq:H-tau-one}}{=}\frac{1}{2}\int_\Omega\vec{w}\cdot\left[\vec{t}\times(\curl\vmtestC)\right]\d x
\stackrel{\eqref{eq:averaged-entries}}{=}
(\vec{w},\ovfancy{Z}_{\vec{u}}^*\,\vmtestC).
\end{align*}
Finally we compute the adjoint to the linearized operator
$\ovfancy{Z}_{\vmag}$, again with $\vec{c}\in\vec{H}_\tau^1(\Omega)$,
\begin{align*}
&(\ovfancy{Z}_{\vmag}\,\vmtestD,\vmtestC)=\frac{1}{2}\left(\curl(\vec{s}\times\vmtestD),\vmtestC\right)\\
&\stackrel{\eqref{div-cross-int}}{=}\frac{1}{2}\int_\Omega(\vec{s}\times\vmtestD)\cdot(\curl\vmtestC)\d x
- \frac{1}{2}\int_{\p\Omega}(\vec{s}\times\vmtestD)\cdot(\vmtestC\times\vec{n})\d s\\
&\stackrel{\eqref{piped}}{=} \frac{1}{2}\int_\Omega\vmtestD\cdot\left[(\curl\vmtestC)\times\vec{s}\right]\d x
- \frac{1}{2}\int_{\p\Omega}\vmtestD\cdot\left[\vec{s}\times(\vmtestC\times\vec{n})\right] 
- (\vec{s}\times\vmtestD)\cdot(\vmtestC\times\vec{n})\d s\\
&\stackrel{\eqref{eq:H-tau-one}}{=}\frac{1}{2}\int_\Omega\vmtestD\cdot\left[(\curl\vmtestC)\times\vec{s}\right]\d x  \stackrel{\eqref{eq:averaged-entries}}{=} (\vmtestD,\ovfancy{Z}_{\vmag}^*\,\vmtestC).
\end{align*}
The operator $\vfancy{C}^*$  is identical to the one presented in \cite{Estep2010}.

\subsection{Well posedness of the adjoint problem}
\label{sec:weak_form_adj_well_posedness}
In this section we prove the well-posedness of the adjoint problem
\S\ref{sect:MHD-weak-adjoint} equation \eqref{eq:EP-dual-problem} using a
 saddle point type argument.  To keep consistent with the standard
setting of saddle point problems \cite{ern_guermond_2011, brenner_scott_2011},
we use the notation $X:=\vec{H}^1_0(\Omega)\times\vec{H}^1_\tau(\Omega)$ and $M:=L^2(\Omega)$
so that $\EPspace = X\times M$.
We equip the space $X$ with the graph norm
\begin{equation}
\|(\vec{v},\vmtestC)\|_X := (\|\vec{v}\|_1^2 + \|\vmtestC\|_{1}^2)^{1/2}.
\end{equation}

We next
define the bilinear form $a:X\times X\to\R$ by
\begin{equation}
\begin{aligned}
  a((\vphi, \vbeta), (\vec{v},
  \vmtestC))=\frac{1}{\Re}(\nabla\vphi,\nabla\vec{v})+
  \left(\ovfancy{C}^*\vphi, \vec{v}\right)\\
  +\frac{\kappa}{\Rm}\left(\curl\vbeta,\curl\vmtestC\right)+
  \frac{\kappa}{\Rm}\left(\diver\vbeta,\diver\vmtestC\right)\\
  -\kappa\left(\ovfancy{Y}^*\vphi,\vmtestC\right)
-\kappa\left(\ovfancy{Z}_{\vec{u}}^*\vbeta,\vec{v}\right)-
\kappa\left(\ovfancy{Z}_{\vmag}^*\vbeta,\vmtestC\right),
\end{aligned}
\end{equation}
and the mixed form $b:X\times M\to\R$ by
\begin{equation}
b((\vphi, \vmtestC), \pi)=(\pi,\diver \vphi).
\end{equation}
The weak dual problem \eqref{eq:EP-dual-problem} is then equivalent to the following mixed
problem: find
$((\vphi, \vbeta), \pi)\in X\times M$ such that
\begin{equation}
\begin{cases}
a((\vphi, \vbeta), (\vec{v}, \vmtestC)) + b((\vec{v}, \vmtestC), \pi)
= f(\vec{v},\vmtestC),\,&\forall(\vec{v},\vmtestC)\in X,\\
b((\vphi, \vbeta), q) = -g(q),\,&\forall q\in M,
\end{cases}\label{eq:saddle-form}
\end{equation}
where $f(\vec{v},\vmtestC)=(\vec{\psi}_{\vec{u}},\vec{v})
+ (\vec{\psi}_{\vmag}, \vec{c})$, $g(q) = (\psi_p, q)$  and $\Psi
= \begin{bmatrix}\vec{\psi}_{\vec{u}},\vec{\psi}_{\vmag},\psi_p\end{bmatrix}^T$
so that  $(\Psi, V) = f(\vec{v},\vmtestC) + g(q)$.  According to the theory of
saddle point systems, in order to show the existence and uniqueness of
solutions to \eqref{eq:saddle-form}, it suffices to show:
\begin{enumerate}[(i)]
\item The bilinear forms $a(\cdot,\cdot)$ and $b(\cdot,\cdot)$ are bounded on their respective domains.
\item The form $a(\cdot,\cdot)$ is coercive on $X_0:=\{v\in X:b(v,q) = 0,\,\forall q\in M\}$.
\item The form $b(\cdot,\cdot)$ satisfies the  inf-sup condition: $\exists\beta >0$ such that
\begin{equation}
\inf_{q\in M}\sup_{(\vec{v},\vmtestC)\in X}\frac{b((\vec{v},\vmtestC),q)}{\|(\vec{v},\vmtestC)\|_X\|q\|_M}\ge \beta.
\end{equation}
\end{enumerate}
We organize these parts in the following lemmas. We make frequent use of
the inequalities in Appendix \ref{app:useful_ineqs} in the proofs.

\begin{lemma}\label{lem:boundedness}
The form $a(\cdot,\cdot)$ is bounded on $X$.
\end{lemma}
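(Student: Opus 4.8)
The plan is to prove a bound of the form $|a((\vphi,\vbeta),(\vec{v},\vmtestC))|\le C\,\|(\vphi,\vbeta)\|_X\,\|(\vec{v},\vmtestC)\|_X$ by estimating each of the seven terms in the definition of $a$ individually and summing. The constant $C$ is permitted to depend on the fixed linearization data $\vec{s}=\vec{u}+\vec{u}_h$ and $\vec{t}=\vmag+\vmag_h$ through their $\vec{H}^1(\Omega)$ norms; since these are data rather than arguments of the form, such dependence is harmless for well-posedness. Throughout I will use that $\|\vphi\|_1,\|\vbeta\|_1\le\|(\vphi,\vbeta)\|_X$, with the analogous bounds for the test pair, so it suffices to control each term by a product of individual $\|\cdot\|_1$ norms.

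The diffusive and magnetic terms are immediate. For $\frac{1}{\Re}(\nabla\vphi,\nabla\vec{v})$ I apply Cauchy--Schwarz to obtain $\frac{1}{\Re}\|\nabla\vphi\|\,\|\nabla\vec{v}\|\le\frac{1}{\Re}\|\vphi\|_1\|\vec{v}\|_1$, and for the two terms $\frac{\kappa}{\Rm}(\curl\vbeta,\curl\vmtestC)$ and $\frac{\kappa}{\Rm}(\diver\vbeta,\diver\vmtestC)$ I combine Cauchy--Schwarz with the elementary bounds $\|\curl\vec{w}\|\le\|\nabla\vec{w}\|\le\|\vec{w}\|_1$ and $\|\diver\vec{w}\|\le\|\nabla\vec{w}\|\le\|\vec{w}\|_1$ (Appendix \ref{app:useful_ineqs}).

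The remaining four contributions come from the linearized operators $\ovfancy{C}^*,\ovfancy{Y}^*,\ovfancy{Z}_{\vec{u}}^*,\ovfancy{Z}_{\vmag}^*$ of \eqref{eq:averaged-entries} and all share a trilinear structure: each is the integral of a product of three factors, exactly one of which carries a first derivative. The uniform strategy is to place the differentiated factor in $L^2(\Omega)$ and the two undifferentiated factors in $L^4(\Omega)$, estimate by H\"older's inequality, and then invoke the Sobolev embedding $\vec{H}^1(\Omega)\hookrightarrow\vec{L}^4(\Omega)$, valid for $d=2,3$ (Appendix \ref{app:useful_ineqs}). For example, $|\kappa(\ovfancy{Z}_{\vec{u}}^*\vbeta,\vec{v})|=\tfrac{\kappa}{2}|(\vec{s}\times\curl\vbeta,\vec{v})|\le\tfrac{\kappa}{2}\|\vec{s}\|_{L^4}\|\curl\vbeta\|\,\|\vec{v}\|_{L^4}\le C\|\vec{s}\|_1\|\vbeta\|_1\|\vec{v}\|_1$, and the $\ovfancy{Z}_{\vmag}^*$ term is identical with $\vec{t}$ in place of $\vec{s}$. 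The convective part of $\ovfancy{C}^*$, namely $((\vec{s}\cdot\nabla)\vphi,\vec{v})$, is handled the same way with $\vec{s}\in L^4$, $\nabla\vphi\in L^2$, $\vec{v}\in L^4$, and its two zeroth-order pieces $((\nabla\vec{s})^T\vphi,\vec{v})$ and $((\diver\vec{s})\vphi,\vec{v})$ follow by putting the derivative of $\vec{s}$ in $L^2$ and $\vphi,\vec{v}$ in $L^4$.

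The main obstacle is the $\ovfancy{Y}^*$ term, and specifically its piece $\curl(\vec{t}\times\vphi)$, in which the curl lands on a product. I will treat it in either of two equivalent ways. The first is to expand $\curl(\vec{t}\times\vphi)$ through the vector identity of Appendix \ref{append:vect-ind} into four terms, each of the trilinear type just described (the single derivative falling on $\vec{t}$ or $\vphi$ and kept in $L^2$, the other two factors in $L^4$). The second, cleaner, route is to integrate by parts to move the curl onto $\vmtestC$, which is legitimate because $\vmtestC\in\vec{H}^1_\tau(\Omega)$ forces the resulting boundary term to vanish, exactly as in the derivation of \eqref{eq:averaged-entries}; this leaves $(\vec{t}\times\vphi,\curl\vmtestC)$, bounded by $\|\vec{t}\|_{L^4}\|\vphi\|_{L^4}\|\curl\vmtestC\|\le C\|\vec{t}\|_1\|\vphi\|_1\|\vmtestC\|_1$. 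The companion piece $((\curl\vec{t})\times\vphi,\vmtestC)$ is of the standard trilinear type. Summing the seven bounds and absorbing the data-dependent factors into $C$ then yields $|a((\vphi,\vbeta),(\vec{v},\vmtestC))|\le C\,\|(\vphi,\vbeta)\|_X\,\|(\vec{v},\vmtestC)\|_X$, as required.
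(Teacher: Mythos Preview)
Your proposal is correct and follows essentially the same approach as the paper. The paper formally splits $a=a_0+a_1$, citing \cite{gunzburger_meir_peterson_1991} for the bound on $a_0$ and then handling each term of $a_1$ via the same $L^4$--$L^4$--$L^2$ H\"older argument and Sobolev embedding you describe; in particular, for the awkward $\curl(\vec{t}\times\vphi)$ piece of $\ovfancy{Y}^*$ the paper takes exactly your ``cleaner'' route of integrating by parts onto $\vmtestC$ (the boundary term vanishing because $\vmtestC\in\vec{H}^1_\tau(\Omega)$). The only cosmetic differences are that the paper tracks explicit constants rather than absorbing into a generic $C$, and that you handle the diffusive terms in $a_0$ directly rather than by citation.
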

\begin{proof}
Consider the splitting 
\begin{equation}
a((\vphi,\vbeta),(\vec{v},\vmtestC))=a_0((\vphi,\vbeta),(\vec{v},\vmtestC))
+ a_1((\vphi, \vbeta), (\vec{v}, \vmtestC))\label{eq:splitting}
\end{equation}
where
\begin{align*}
&a_0((\vphi, \vbeta), (\vec{v}, \vmtestC))=\frac{1}{\Re}(\nabla\vphi,\nabla\vec{v})
+\frac{\kappa}{\Rm}\left(\curl\vbeta,\curl\vmtestC\right)
+ \frac{\kappa}{\Rm}\left(\diver\vbeta,\diver\vmtestC\right),\\
&a_1((\vphi, \vbeta), (\vec{v}, \vmtestC))=\left(\ovfancy{C}^*\vphi, \vec{v}\right)
-\kappa\left(\ovfancy{Y}^*\vphi,\vmtestC\right)
-\kappa\left(\ovfancy{Z}_{\vec{u}}^*\vbeta,\vec{v}\right)-\kappa\left(\ovfancy{Z}_{\vmag}^*\vbeta,\vmtestC\right).
\end{align*}
Then it suffices to show that both $a_0(\cdot,\cdot)$ and $a_1(\cdot,\cdot)$
are bounded separately. The proof for the boundedness of  $a_0$ is given in
\cite{gunzburger_meir_peterson_1991}. For $a_1$ observe that
\begin{equation}
\begin{aligned}
|a_1((\vphi, \vbeta), (\vec{v}, \vmtestC))|\le\int_\Omega\left|\ovfancy{C}^*\vphi\cdot\vec{v}\right|\d x
+ \kappa\int_\Omega\left|\ovfancy{Y}^*\vphi\cdot\vmtestC\right|\d x\\
+ \kappa\int_\Omega\left|\ovfancy{Z}^*_{\vec{u}}\vbeta\cdot\vec{v}\right|\d x
+ \kappa\int_\Omega\left|\ovfancy{Z}^*_{\vmag}\vbeta\cdot\vmtestC\right|\d x.
\end{aligned}\label{eq:init-bound}
\end{equation}

Now, for the first term on the right hand side of \eqref{eq:init-bound},
\begin{align*}
&\int_\Omega\left|\ovfancy{C}^*\vphi\cdot\vec{v}\right|\d x=\frac{1}{2}\int_\Omega\big|\left[(\nabla\vec{s})^T\vphi-\left((\vec{s}\cdot\nabla)\vphi\right)
-(\diver\vec{s})\vphi\right]\cdot\vec{v}\big|\d x\\
&=\frac{1}{2}\int_\Omega\big|\vphi^T(\nabla\vec{s})\vec{v}-\vec{v}^T(\nabla\vphi)\vec{s}-(\diver\vec{s})(\vphi\cdot\vec{v})\big|\d x\\
&\stackrel{\eqref{eq:quad-form-bound}}{\le}\frac{1}{2}\left[\|\vphi\|_{\vec{L}^4} \|\vec{s}\|_1\|\vec{v}\|_{\vec{L}^4}
+\|\vphi\|_1\|\vec{s}\|_{\vec{L}^4}\|\vec{v}\|_{\vec{L}^4} 
+ \|\diver\vec{s}\|\|\vphi\cdot\vec{v}\|\right]\\
&\stackrel{\eqref{eq:div-ineq}}{\le}\frac{1}{2}\left[\|\vphi\|_{\vec{L}^4} \|\vec{s}\|_1\|\vec{v}\|_{\vec{L}^4}
+\|\vphi\|_1\|\vec{s}\|_{\vec{L}^4}\|\vec{v}\|_{\vec{L}^4} + \sqrt{3}\|\vec{s}\|_1\|\vphi\|_{\vec{L}^4}\|\vec{v}\|_{\vec{L}^4}\right]\\
&\stackrel{\eqref{eq:ineq_embedding}}{\leq} \frac{\embed}{2} \left ( \|\vphi \|_{1} \| \vec{s}\|_{1}  \|\vec{v}\|_{1}  
+ \|\vec{s} \|_{1} \| \vphi\|_{1} \|\vec{v} \|_{1} 
+ \sqrt{3} \|\vec{s}\|_1 \| \vphi \|_{1} \| \vec{v} \|_{1}
    \right)\\
&\leq \frac{3\sqrt{3} \embed  }{2}  \|\vec{s}\|_1 \| \vphi \|_{1} \| \vec{v} \|_{1},
\end{align*}
where $\embed$ is the square of the embedding constant of $\vec{H}^1(\Omega)$
into $\vec{L}^4(\Omega)$, see \eqref{eq:ineq_embedding}.
For the second term on the right hand side of \eqref{eq:init-bound},
\begin{align*}
    &\kappa\left(\ovfancy{Y}^*\vphi\cdot\vmtestC\right) \le \frac{\kappa}{2}\int_\Omega\big|\vmtestC\cdot\left[(\curl\vec{t})\times\vphi\right]\big|
    + \big|\vmtestC\cdot\left[\curl(\vec{t}\times\vphi)\right]\big|\d x\\
    &\stackrel{\eqref{div-cross-int}}{=}\frac{\kappa}{2}\int_\Omega\big|\vmtestC\cdot\left((\curl\vec{t})\times\vphi\right)\big|
    + \big|(\curl\vmtestC)\cdot\left(\vec{t}\times\vphi\right)\big|\d x\\
    &\stackrel{\eqref{piped}}{=}\frac{\kappa}{2}\int_\Omega\big|(\curl\vec{t})\cdot\left(\vmtestC\times\vphi\right)\big|
    + \big|(\curl\vmtestC)\cdot\left(\vec{t}\times\vphi\right)\big|\d x\\
    &\stackrel{\eqref{eq:cross-ineq}}{\le} \frac{\kappa}{2}\left(\|\curl\vec{t}\|_{\vec{L}^2}\|\vmtestC\|_{\vec{L}^4}\|\vphi\|_{\vec{L}^4}
    + \|\curl\vmtestC\|_{\vec{L}^2}\|\vec{t}\|_{\vec{L}^4}\|\vphi\|_{\vec{L}^4}\right)\\
    &\stackrel{\eqref{eq:curl-ineq}}{\le} \frac{\kappa\sqrt{2}}{2}\left(\|\vmtestC\|_{\vec{L}^4}\|\vec{t}\|_{1}\|\vphi\|_{\vec{L}^4}
    + \|\vmtestC\|_{1}\|\vec{t}\|_{\vec{L}^4}\|\vphi\|_{\vec{L}^4}\right)\\
		&\stackrel{\eqref{eq:ineq_embedding}}{\le} \kappa\embed \sqrt{2}\|\vmtestC\|_{1}\|\vec{t}\|_{1}\|\vphi\|_{1}.
\end{align*}
	
For the third term on the right hand side of \eqref{eq:init-bound},
\begin{align*}
\kappa\left(\ovfancy{Z}_{\vec{u}}^*\vbeta,\vec{v}\right)
&\le\frac{\kappa}{2}\int_\Omega\big|\vec{v}\cdot\left[\vec{t}\times(\curl\vec{\vbeta})\right]\big|\d x
\stackrel{\eqref{div-cross-int}}{=}\frac{\kappa}{2}\int_\Omega\big|(\vec{v}\times\vec{t})\cdot(\curl\vbeta)\big|\d x\\
&\stackrel{\eqref{eq:curl-ineq}}{\le}\frac{\kappa \sqrt{2}}{2}\|\vec{v}\|_{\vec{L}^4}\|\vec{t}\|_{\vec{L}^4}\|\vbeta\|_{1}
\stackrel{\eqref{eq:ineq_embedding}}{\le}\frac{\kappa \embed \sqrt{2}}{2}\|\vec{v}\|_{1}\|\vec{t}\|_{1}\|\vbeta\|_{1}.
\end{align*}
The fourth term follows the same argument as the third term to yield the bound,
\begin{align}
\kappa\left(\ovfancy{Z}_{\vmag}^*\vbeta,\vmtestC\right)
\le\frac{\kappa \embed \sqrt{2}}{2}\|\vmtestC\|_{1}\|\vec{s}\|_{1}\|\vbeta\|_{1}.
\end{align}
Putting these bounds together,
we conclude
\begin{equation}
\begin{aligned}
a_1((\vphi, \vbeta), (\vec{v},\vmtestC)) \le\embed \bigg(\frac{3\sqrt{3}}{2}  \|\vec{s}\|_1 \| \vphi \|_{1} \| \vec{v} \|_{1}
+\kappa\sqrt{2}\|\vmtestC\|_{1}\|\vec{t}\|_{1}\|\vphi\|_{1}\\
+\frac{\kappa \sqrt{2}}{2}\|\vec{v}\|_{1}\|\vec{t}\|_{1}\|\vbeta\|_{1}
+\frac{\kappa \sqrt{2}}{2}\|\vmtestC\|_{1}\|\vec{s}\|_{1}\|\vbeta\|_{1}\bigg)\\
\stackrel{\eqref{eq:CS}}{\le}\embed \Bigg(\frac{3\sqrt{3}}{2}  \|\vec{s}\|_1 \| \vphi \|_{1} \| \vec{v} \|_{1}
+\frac{\kappa \sqrt{2}}{2}\|\vmtestC\|_{1}\|\vec{s}\|_{1}\|\vbeta\|_{1}\\
+\|\vec{t}\|_1\kappa\sqrt{2}\|(\vec{v},\vmtestC)\|_{X}\|(\vphi,\vbeta)\|_{X}\Bigg)\\
\stackrel{\eqref{eq:CS}}{\le}
\embed \Bigg(\|\vec{s}\|_1\max\left\{\frac{3\sqrt{3}}{2},\frac{\kappa\sqrt{2}}{2}\right\}\|(\vec{v},\vmtestC)\|_{X}\|(\vphi,\vbeta)\|_{X}\\
+\|\vec{t}\|_1\|(\vec{v},\vmtestC)\|_{X}\|(\vphi,\vbeta)\|_{X}\Bigg)\\
\le \alpha_b\|(\vec{v},\vmtestC)\|_{X}\|(\vphi,\vbeta)\|_{X},
\end{aligned}\label{eq:final-continuity}
\end{equation}
where 
\[\alpha_b = \max\left\{\|\vec{s}\|_1\max\left\{\frac{3\sqrt{3}}{2},\frac{\kappa\sqrt{2}}{2}\right\}, \|\vec{t}\|_1\right\}.\]
\end{proof}
Now we consider the coercivity of the bilinear form $a(\cdot, \cdot)$ on
$X$.
\begin{lemma}\label{lem:coerc}
There exists a constant $\alpha_c > 0$ such that whenever
\begin{equation}
\frac{k_1}{\Re}-\embed \left[\frac{3\sqrt{3}
}{2}\|\vec{s}\|_1+\frac{3\kappa\sqrt{2}}{4}\|\vec{t}\|_1\right]>0,\label{eq:pos1}
\end{equation}
and
\begin{equation}
\frac{k_2\kappa}{\Rm^2}-\embed
\left[\frac{\kappa\sqrt{2}}{2}\|\vec{s}\|_1+\frac{3\kappa\sqrt{2}}{4}\|\vec{t}\|_1\right]>0\label{eq:pos2}
\end{equation}
then
\begin{equation}
a((\vphi, \vbeta),(\vphi,\vbeta)) \ge\alpha_c\|(\vphi,\vbeta)\|_{X}^2,\quad\forall (\vphi,\vbeta)\in X.
\end{equation}
\end{lemma}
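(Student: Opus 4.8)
The plan is to bound $a((\vphi,\vbeta),(\vphi,\vbeta))$ from below using the splitting $a=a_0+a_1$ introduced in \eqref{eq:splitting}. On the diagonal the symmetric principal part is a sum of squares,
\[
a_0((\vphi,\vbeta),(\vphi,\vbeta)) = \tfrac{1}{\Re}\|\nabla\vphi\|^2 + \tfrac{\kappa}{\Rm}\|\curl\vbeta\|^2 + \tfrac{\kappa}{\Rm}\|\diver\vbeta\|^2 \ge 0,
\]
so that $a((\vphi,\vbeta),(\vphi,\vbeta)) \ge a_0((\vphi,\vbeta),(\vphi,\vbeta)) - |a_1((\vphi,\vbeta),(\vphi,\vbeta))|$. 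The whole argument then reduces to (i) promoting the seminorms in $a_0$ to full $\|\cdot\|_1$ norms, and (ii) absorbing $|a_1|$ into them.

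For step (i) I would invoke two functional inequalities. Since $\vphi\in\vec{H}^1_0(\Omega)$, the Poincar\'e inequality supplies a constant $k_1>0$ with $\|\nabla\vphi\|^2\ge k_1\|\vphi\|_1^2$. Since $\vbeta\in\vec{H}^1_\tau(\Omega)$ and $\Omega$ is bounded, convex and polyhedral, the continuous embedding $\vec{H}(\mathbf{curl},\Omega)\cap\vec{H}(\mathrm{div},\Omega)\hookrightarrow\vec{H}^1(\Omega)$ recalled in \S\ref{sect:exact-penalty} (together with the associated Friedrichs inequality, whose kernel of harmonic fields is trivial on a convex domain) supplies a constant $k_2>0$ with $\tfrac{\kappa}{\Rm}\big(\|\curl\vbeta\|^2+\|\diver\vbeta\|^2\big)\ge \tfrac{k_2\kappa}{\Rm^2}\|\vbeta\|_1^2$, where $k_2$ bundles the embedding constant with the factor of $\Rm$. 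These produce exactly the two leading terms of \eqref{eq:pos1} and \eqref{eq:pos2}.

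For step (ii) I would reuse, on the diagonal $\vec{v}=\vphi$, $\vmtestC=\vbeta$, the four estimates already derived inside the proof of Lemma~\ref{lem:boundedness}: the $\ovfancy{C}^*$ term contributes $\tfrac{3\sqrt{3}}{2}\embed\|\vec{s}\|_1\|\vphi\|_1^2$, the $\ovfancy{Z}_{\vmag}^*$ term contributes $\tfrac{\kappa\sqrt{2}}{2}\embed\|\vec{s}\|_1\|\vbeta\|_1^2$, and the $\ovfancy{Y}^*$ and $\ovfancy{Z}_{\vec{u}}^*$ terms combine into a single mixed term $\tfrac{3\kappa\sqrt{2}}{2}\embed\|\vec{t}\|_1\|\vphi\|_1\|\vbeta\|_1$. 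A symmetric Young split $\|\vphi\|_1\|\vbeta\|_1\le\tfrac12(\|\vphi\|_1^2+\|\vbeta\|_1^2)$ via \eqref{eq:CS} sends half of the mixed coefficient, namely $\tfrac{3\kappa\sqrt{2}}{4}\embed\|\vec{t}\|_1$, onto each of $\|\vphi\|_1^2$ and $\|\vbeta\|_1^2$; this is the one place where the split must be chosen symmetrically so that the leftover coefficients coincide with the brackets in \eqref{eq:pos1} and \eqref{eq:pos2}.

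Combining (i) and (ii) yields
\[
a((\vphi,\vbeta),(\vphi,\vbeta)) \ge \Big(\tfrac{k_1}{\Re}-\embed\big[\tfrac{3\sqrt{3}}{2}\|\vec{s}\|_1+\tfrac{3\kappa\sqrt{2}}{4}\|\vec{t}\|_1\big]\Big)\|\vphi\|_1^2 + \Big(\tfrac{k_2\kappa}{\Rm^2}-\embed\big[\tfrac{\kappa\sqrt{2}}{2}\|\vec{s}\|_1+\tfrac{3\kappa\sqrt{2}}{4}\|\vec{t}\|_1\big]\Big)\|\vbeta\|_1^2,
\]
and under the hypotheses \eqref{eq:pos1} and \eqref{eq:pos2} both parenthesized coefficients are strictly positive; taking $\alpha_c$ to be their minimum gives $a((\vphi,\vbeta),(\vphi,\vbeta))\ge\alpha_c\|(\vphi,\vbeta)\|_X^2$, as claimed. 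The bookkeeping in step (ii) is routine given Lemma~\ref{lem:boundedness}; the genuine analytical content, and the step I expect to be the crux, is the $\vbeta$-lower bound in step (i), since it is precisely the convex polyhedral geometry that makes the $\vec{H}(\mathbf{curl})\cap\vec{H}(\mathrm{div})\hookrightarrow\vec{H}^1$ embedding, and the triviality of the harmonic-field kernel, available.
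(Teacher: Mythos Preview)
Your proposal is correct and follows essentially the same route as the paper: split $a=a_0+a_1$, promote the seminorms in $a_0$ to full $\|\cdot\|_1$ norms via the Poincar\'e inequality \eqref{eq:poincare} and the $\vec{H}(\mathbf{curl})\cap\vec{H}(\mathrm{div})\hookrightarrow\vec{H}^1$ embedding, reuse the four term-by-term bounds from Lemma~\ref{lem:boundedness} on the diagonal, and split the single mixed $\|\vphi\|_1\|\vbeta\|_1$ term symmetrically. The only cosmetic discrepancy is that the Young split should be referenced as \eqref{eq:pythag-ineq} rather than \eqref{eq:CS}.
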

\begin{proof}
Using the splitting established in the previous lemma,
\begin{equation}
\begin{aligned}
&a((\vphi,\vbeta),(\vphi, \vbeta))\ge  a_0((\vphi,\vbeta),(\vphi, \vbeta))\ -\left|a_1((\vphi, \vbeta), (\vphi, \vbeta))\right|\\
&=\frac{1}{\Re}(\nabla\vphi,\nabla\vphi)
+\frac{\kappa}{\Rm}\left(\curl\vbeta,\curl\vbeta\right)
+ \frac{\kappa}{\Rm}\left(\diver\vbeta,\diver\vbeta\right)\\
&-\left|a_1((\vphi, \vbeta), (\vphi, \vbeta))\right|\\
&\ge\frac{k_1}{\Re}\|\vphi\|_1^2+\frac{k_2\kappa}{\Rm^2}\|\vbeta\|_1^2
-\left|a_1((\vphi, \vbeta), (\vphi, \vbeta))\right|
\end{aligned}
\label{eq:first-coerc}
\end{equation}
where $k_1$ comes from the Poincar\'e type inequality \eqref{eq:poincare},
and $k_2$ is defined though
\begin{equation}
\|\curl\vec{v}\|_0^2 + \|\diver\vec{v}\|_0^2\ge k_2\|\vec{v}\|_1^2,\quad\forall\vec{v}\in\vec{H}_{\tau}^1(\Omega),
\end{equation}
which is valid under the restrictions we have imposed on the domain $\Omega$
and the continuous embedding of
$\vec{H}_\tau^1(\Omega)\hookrightarrow\vec{H}^1(\Omega)$~\cite{Girault1986,gunzburger_meir_peterson_1991}.
Picking up from \eqref{eq:first-coerc} and using \eqref{eq:pythag-ineq} we
conclude that,
\begin{align*}
&a((\vphi,\vbeta),(\vphi, \vbeta))\ge
\frac{k_1}{\Re}\|\vphi\|_1^2+\frac{k_2\kappa}{\Rm^2}\|\vbeta\|_1^2 - |a_1((\vphi,\vbeta),(\vphi, \vbeta))|\\
&\stackrel{\eqref{eq:final-continuity}}{\ge}
\left(\frac{k_1}{\Re}-\frac{\embed 3\sqrt{3} }{2}\|\vec{s}\|_1\right) \|\vphi\|_1^2
+\left(\frac{k_2\kappa}{\Rm^2}-\frac{\embed \kappa\sqrt{2}}{2}\|\vec{s}\|_1\right)\|\vbeta\|_1^2\\
&- \frac{\embed 3\kappa\sqrt{2}}{2}\|\vphi\|_1\|\vec{t}\|_1\|\vbeta\|_1\\
&\stackrel{\eqref{eq:pythag-ineq}}{\ge}
\left(\frac{k_1}{\Re}-\frac{\embed 3\sqrt{3} }{2}\|\vec{s}\|_1\right) \|\vphi\|_1^2
+\left(\frac{k_2\kappa}{\Rm^2}-\frac{\embed \kappa\sqrt{2}}{2}\|\vec{s}\|_1\right)\|\vbeta\|_1^2\\
&-\frac{\embed 3\kappa\sqrt{2}}{4}\|\vec{t}\|_1\left(\|\vbeta\|_1^2+\|\vphi\|_1^2\right)\\
&=\left(\frac{k_1}{\Re}-\embed \left[\frac{3\sqrt{3} }{2}\|\vec{s}\|_1+\frac{3\kappa\sqrt{2}}{4}\|\vec{t}\|_1\right]\right) \|\vphi\|_1^2\\
&+\left(\frac{k_2\kappa}{\Rm^2}-\embed \left[\frac{\kappa\sqrt{2}}{2}\|\vec{s}\|_1+\frac{3\kappa\sqrt{2}}{4}\|\vec{t}\|_1\right]\right)\|\vbeta\|_1^2.
\end{align*}
Thus, taking
\begin{equation}
\begin{aligned}
\alpha_c=\min\Bigg\{\frac{k_1}{\Re}-\embed \left[\frac{3\sqrt{3} }{2}\|\vec{s}\|_1+\frac{3\kappa\sqrt{2}}{4}\|\vec{t}\|_1\right],\\
\frac{k_2\kappa}{\Rm^2}-\embed \left[\frac{\kappa\sqrt{2}}{2}\|\vec{s}\|_1+\frac{3\kappa\sqrt{2}}{4}\|\vec{t}\|_1\right]\Bigg\},
\end{aligned}
\end{equation}
concludes the lemma.
\end{proof}
\begin{remark}
We note that the quantities assumed to be positive  in \eqref{eq:pos1} and
\eqref{eq:pos2}, depend on the computed and true solutions through
$\|\vec{s}\|$ and $\|\vec{t}\|$, which should should both be bounded for ``small
data'' as described precisely in Theorem 4.7 of
\cite{gunzburger_meir_peterson_1991}. 
The two quantities in \eqref{eq:pos1} and \eqref{eq:pos2} also depend on the
fluid and magnetic Reynolds numbers ($\Re$ and $\Rm$ respectively).  In
particular, for small to moderate $\Re$ and $\Rm$ these inequalities might very
well be satisfied, which is the case for dissipative MHD. However, the larger
are $\Re$ and $\Rm$ (and in particular for the limit as $\Re,\Rm\to\infty$,
that is in the case of ideal MHD), the smaller the positive terms of
\eqref{eq:pos1} and \eqref{eq:pos2}, and thus coercivity cannot be proven by
this method.  We conclude this method might therefore need to be adapted for
high $\Re$ or $\Rm$ flows to guarantee coercivity.
\end{remark}

Now we are prepared to prove the main result.
\begin{theorem}
Under the conditions of Lemma \ref{lem:coerc} there exists a unique solution to the dual problem \eqref{eq:EP-dual-problem}.
\end{theorem}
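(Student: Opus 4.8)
The plan is to verify that the mixed problem \eqref{eq:saddle-form} satisfies the three hypotheses (i)--(iii) listed above and then invoke the abstract well-posedness theorem for saddle-point systems (the Babu\v{s}ka--Brezzi theorem, e.g.\ as stated in \cite{boffi2013mixed, ern_guermond_2011}). Two of the ingredients are already in hand: Lemma~\ref{lem:boundedness} supplies the boundedness of $a(\cdot,\cdot)$ on $X$ (half of (i)), and Lemma~\ref{lem:coerc} shows that, under the standing hypotheses \eqref{eq:pos1}--\eqref{eq:pos2}, $a(\cdot,\cdot)$ is coercive on all of $X$, which \emph{a fortiori} gives coercivity on the kernel $X_0\subset X$ demanded by (ii). Since $a(\cdot,\cdot)$ is not symmetric, I would remark that coercivity on $X_0$ is still sufficient: it immediately yields the (two-sided) inf-sup bound on $X_0$ used in the non-symmetric version of the theory. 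Thus only the two assertions concerning $b(\cdot,\cdot)$ remain to be established.

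First I would dispatch the boundedness of $b(\cdot,\cdot)$, completing (i). Because $b((\vec{v},\vmtestC),\pi)=(\pi,\diver\vec{v})$ involves only the velocity component, Cauchy--Schwarz together with the elementary estimate $\|\diver\vec{v}\|\le\sqrt{d}\,\|\vec{v}\|_1\le\sqrt{d}\,\|(\vec{v},\vmtestC)\|_X$ gives $|b((\vec{v},\vmtestC),\pi)|\le\sqrt{d}\,\|(\vec{v},\vmtestC)\|_X\|\pi\|_M$, which is the desired continuity bound.

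The substantive step is the inf-sup condition (iii), which I expect to be the main obstacle. Here I would again exploit that $b$ depends only on $\vphi$: restricting the supremum to test pairs of the form $(\vec{v},\vec{0})$, for which $\|(\vec{v},\vec{0})\|_X=\|\vec{v}\|_1$, reduces the inf-sup over $X$ to
\[
\inf_{q\in M}\sup_{\vec{v}\in\vec{H}^1_0(\Omega)}\frac{(q,\diver\vec{v})}{\|\vec{v}\|_1\,\|q\|_M}\ge\beta .
\]
This is precisely the classical divergence (Stokes) inf-sup condition, which holds on a bounded convex polyhedral domain by virtue of the existence of a bounded right inverse of the divergence on $\vec{H}^1_0(\Omega)$ (the Bogovski\u{\i}/Ne\v{c}as construction); see \cite{Girault1986, boffi2013mixed, ern_guermond_2011}. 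The delicate point is the choice $M=L^2(\Omega)$: the classical bound holds with the pressure restricted to the zero-mean space $L^2_0(\Omega)$, since the constant mode lies in the orthogonal complement of $\diver\vec{H}^1_0(\Omega)$. I would therefore either carry out the argument on $L^2_0(\Omega)$ (the natural convention, consistent with the well-posedness of the primal problem recalled in Remark~\ref{rem:wellposed}) or otherwise fix the pressure constant, and confirm that this is compatible with the formulation of \eqref{eq:EP-dual-problem}.

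With (i)--(iii) in place, the abstract saddle-point theorem produces a unique $((\vphi,\vbeta),\pi)\in X\times M$ solving \eqref{eq:saddle-form}; by the equivalence established when rewriting \eqref{eq:EP-dual-problem} as \eqref{eq:saddle-form}, this is the unique solution of the dual problem, which concludes the proof. I emphasize that essentially all of the analytic difficulty for the $a$-form has already been absorbed into Lemmas~\ref{lem:boundedness} and~\ref{lem:coerc}, so the remaining work is the (standard, but convention-sensitive) inf-sup verification for the divergence pairing.
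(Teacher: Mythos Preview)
Your proposal is correct and follows essentially the same route as the paper's own proof, which simply cites the boundedness and inf-sup of $b(\cdot,\cdot)$ as standard and invokes Lemmas~\ref{lem:boundedness} and~\ref{lem:coerc} for $a(\cdot,\cdot)$. Your argument is in fact more careful than the paper's, since you explicitly flag the $L^2(\Omega)$ versus $L^2_0(\Omega)$ convention for the pressure space---a point the paper passes over in silence.
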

\begin{proof}
The boundedness and inf-sup condition for $b(\cdot,\cdot)$ are standard see e.g. \cite{brenner_scott_2011}.
The boundedness of $a(\cdot,\cdot)$ follows from Lemma \ref{lem:boundedness}, and Lemma \ref{lem:coerc} proves
$a(\cdot,\cdot)$ is coercive on $X$ so in particular on $X_0$.
\end{proof}

\section{Conclusions}
We have presented an adjoint-based \textit{a posteriori} analysis of adjoint
for an exact penalty formulation of incompressible resistive MHD.  This
included the derivation of the adjoint error estimate, and a development that
characterized the separate contributions of error from the momentum, continuity
and magnetic field equations. The numerical examples illustrated both the
accuracy  as well as the usefulness of the error estimate
for the the assessment of the respective sources of the error from the
different  physics components. The example QoIs included two differing physically
meaningful  quantities, the averaged velocity-related to the flow rate, and
the induced magnetic field strength.

The novel aspects of this work include defining an adjoint problem for an
overdetermined system, namely the stationary MHD equations. In particular, the
standard definition of an adjoint operator does not suffice and we must define
the adjoint directly for the weak problem.  Moreover, we prove the
well-posedness of the adjoint problem.  The error estimates derived in this
article are also amenable for using in adaptive refinement algorithms e.g. see
\cite{AO2000,CEJLT10,ABC+2011,CET+2016,Fidkowski_Darmofal_2011,chaudhry_2018}.

\appendix
\section{Standard function spaces}
\label{app:func_spaces}
 We denote by $L^2(\Omega)$ the set of all square Lebesgue
integrable functions on $\Omega \subset \mathbb{R}^d$ with associated inner product
$(\cdot,\cdot)$ and norm $\|\cdot\|$. This extends
naturally to vector valued functions, denoted by $\vec{L}^2(\Omega)$, where the inner product is given by,
\begin{equation*}
(\vec{u},\vec{v}) = \sum_{i=1}^d(u_i, v_i).
\end{equation*}

The Sobolev norm for $p=2$ is,
\begin{equation*}
	\|v\|_m:=\left(\sum_{|\alpha|=0}^m\big\|D^\alpha v\big\|^2\right)^{1/2}.
\end{equation*}
where $\alpha=(\alpha_1,\dots,\alpha_m)$ is a multi-index of length $m$ and
\[D^\alpha v:=\p_{x_1}^{\alpha_1}\dots\p_{x_m}^{\alpha_m}v,\]
where the partial derivatives are taken in the weak sense.
Thus, the Hilbert spaces $H^m$ for $m=0,1,2,\dots$ is simply
be defined as functions with bounded $m$-norm,
\begin{equation*}
	H^m(\Omega):=\{v:\|v\|_m<\infty\}.
\end{equation*}
The space $H^0(\Omega)$ is identified with $L^2(\Omega)$.
For vector valued functions, the Hilbert space $\vec{H}^m$ is defined as,
\begin{equation*}
	\vec{H}^m(\Omega):=\{\vec{v}:v_i\in H^m(\Omega),\,i=1,\dots,d\},
\end{equation*}
with associated norm
\begin{equation*}
\|\vec{v}\|_m=\left(\sum_{i=1}^d \|v_i\|_{m}^2\right)^{1/2}.
\end{equation*}
\section{Vector identities and inequalities}\label{append:vect-ind}
We use the following vector identities,
\begin{subequations}
\begin{align}
	\vec{A}\cdot(\vec{B}\times \vec{C}) &= \vec{B}\cdot(\vec{C}\times \vec{A})=\vec{C}\cdot(\vec{A}\times\vec{B}),\label{piped}\\
		\int_\Omega \vec{A}\cdot (\curl \vec{B})\d x &= \int_\Omega \vec{B}\cdot(\curl \vec{A})\d x
  -\int_{\p\Omega}\vec{B}\cdot (\vec{A}\times \vec{n})\d s.\label{div-cross-int}
\end{align}
\end{subequations}
We also make use of the following inequalities for $\vec{u},\vec{v}\in\vec{H}^1(\Omega)$,
\begin{subequations}
\begin{align}
|\vec{u}\cdot\vec{v}|&\le\|\vec{u}\|_{\R^d}\|\vec{v}\|_{\R^d},\\
\|\vec{u}\times\vec{v}\|_{\R^d}&\le\|\vec{u}\|_{\R^d}\|\vec{v}\|_{\R^d},\label{eq:cross-ineq}\\
\|\curl\vec{u}\|_{\R^d}&\le\sqrt{2}\|\nabla\vec{u}\|_{\R^{d\times d}},\label{eq:curl-ineq}\\
|\diver\vec{u}|&\le\sqrt{3}\|\nabla\vec{u}\|_{\R^{d\times d}}\label{eq:div-ineq}\\
\|A \vec{v}\|_{\R^d} & \le \|A\|_{\R^{d\times d}}\|\vec{v}\|_{\R^d}\label{eq:mat-ineq},
\end{align}\label{eq:append-inequalities}
\end{subequations}
and finally the equality
\begin{equation}
\|\nabla\vec{v}^T\|_{\R^{d\times  d}} = \| \nabla\vec{v}\|_{\R^{d\times  d}},\label{eq:grad-transpose}
\end{equation}

\section{Useful inequalities from analysis}
\label{app:useful_ineqs}
\begin{enumerate}
\item The space $\vec{H}^1(\Omega)$ embeds continuously in $\vec{L}^4(\Omega)$ with constant $\sqrt{\embed}$. That is,
$\vec{H}^1(\Omega)\hookrightarrow\vec{L}^4(\Omega)$ such that,
\begin{equation}
\|\vec{v}\|_{\vec{L}^4} \le \sqrt{\embed}\|\vec{v}\|_{\vec{H}^1}. \label{eq:ineq_embedding}
\end{equation}
\item The Cauchy-Schwarz inequality for $\begin{bmatrix}a,b\end{bmatrix},\begin{bmatrix}c,d\end{bmatrix}\in\R^2$,
\begin{equation}
ac + bd = \begin{bmatrix}a, b\end{bmatrix} \begin{bmatrix}c,d\end{bmatrix}^T \le \sqrt{a^2+c^2}\sqrt{b^2 + d^2},\label{eq:CS}
\end{equation}

\item The following inequality follows from the Poincar\'e inequality,
\begin{equation}
\label{eq:poincare}
\|\nabla\vec{v}\|_0^2\ge k_1\|\vec{v}\|_1^2,\quad\forall\vec{v}\in\vec{H}_0^1(\Omega).
\end{equation}

\item For $x,y\in\R$,
\begin{equation}
-xy \ge -\tfrac{1}{2}(x^2+y^2),\label{eq:pythag-ineq}
\end{equation}
\end{enumerate}
We also need the following propositions,
\begin{prop}
Let $\vec{u},\vec{v}, \vec{w}\in\vec{H}^1(\Omega)$. Then there holds
\begin{equation}
\int_\Omega\vec{u}^T(\nabla\vec{v})\vec{w}\d x\le \|\vec{u}\|_{\vec{L}^4}\|\vec{w}\|_{\vec{L}^4}\|\vec{v}\|_1.\label{eq:quad-form-bound}
\end{equation}
\end{prop}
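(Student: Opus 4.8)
The plan is to reduce the claimed integral inequality to an almost-everywhere pointwise bound on the integrand, followed by a single application of Hölder's inequality split across the three factors. The integrand $\vec{u}^T(\nabla\vec{v})\vec{w}$ is the scalar $\vec{u}\cdot\big((\nabla\vec{v})\vec{w}\big)$, so I would first apply the pointwise Cauchy--Schwarz inequality (the first estimate in \eqref{eq:append-inequalities}) to obtain $|\vec{u}^T(\nabla\vec{v})\vec{w}|\le\|\vec{u}\|_{\R^d}\,\|(\nabla\vec{v})\vec{w}\|_{\R^d}$, and then the matrix--vector estimate \eqref{eq:mat-ineq} to bound $\|(\nabla\vec{v})\vec{w}\|_{\R^d}\le\|\nabla\vec{v}\|_{\R^{d\times d}}\,\|\vec{w}\|_{\R^d}$. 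Composing these yields the pointwise bound $|\vec{u}^T(\nabla\vec{v})\vec{w}|\le\|\vec{u}\|_{\R^d}\,\|\nabla\vec{v}\|_{\R^{d\times d}}\,\|\vec{w}\|_{\R^d}$, valid almost everywhere in $\Omega$.

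Since $\int_\Omega\vec{u}^T(\nabla\vec{v})\vec{w}\d x\le\int_\Omega|\vec{u}^T(\nabla\vec{v})\vec{w}|\d x$, the next step is to integrate the pointwise bound and separate the three factors by Hölder's inequality with exponents $4$, $2$, $4$, which are admissible because $\tfrac14+\tfrac12+\tfrac14=1$. This gives
\[
\int_\Omega|\vec{u}^T(\nabla\vec{v})\vec{w}|\d x\le\Big(\int_\Omega\|\vec{u}\|_{\R^d}^4\d x\Big)^{1/4}\Big(\int_\Omega\|\nabla\vec{v}\|_{\R^{d\times d}}^2\d x\Big)^{1/2}\Big(\int_\Omega\|\vec{w}\|_{\R^d}^4\d x\Big)^{1/4}.
\]
The first and third factors are by definition exactly $\|\vec{u}\|_{\vec{L}^4}$ and $\|\vec{w}\|_{\vec{L}^4}$, so only the middle factor remains to be identified.

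For that middle factor I would use that the Frobenius norm satisfies $\int_\Omega\|\nabla\vec{v}\|_{\R^{d\times d}}^2\d x=\sum_{i,j}\|\partial_{x_j}v_i\|^2$, which is dominated by the full norm $\|\vec{v}\|_1^2=\|\vec{v}\|^2+\sum_{i,j}\|\partial_{x_j}v_i\|^2$; hence this factor is at most $\|\vec{v}\|_1$, and chaining the inequalities delivers \eqref{eq:quad-form-bound}. I do not expect a substantive obstacle here, as every step is an elementary estimate using tools already recorded in Appendices \ref{append:vect-ind} and \ref{app:useful_ineqs}. The only points demanding care are assigning the Hölder exponents correctly---the gradient factor must receive exponent $2$ (so it collapses to the $L^2$ gradient norm and then to $\|\vec{v}\|_1$) while the two vector factors receive exponent $4$---and recognizing that the $\R^{d\times d}$ norm of $\nabla\vec{v}$ integrates precisely to the gradient part of the $H^1$ norm, so that bounding it by the full norm $\|\vec{v}\|_1$ is legitimate rather than an overreach.
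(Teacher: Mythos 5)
Your proposal is correct and takes essentially the same route as the paper's proof: both reduce to the pointwise bound $|\vec{u}^T(\nabla\vec{v})\vec{w}|\le\|\vec{u}\|_{\R^d}\,\|\nabla\vec{v}\|_{\R^{d\times d}}\,\|\vec{w}\|_{\R^d}$ (the paper via a componentwise Cauchy--Schwarz expansion, you via the matrix--vector estimate \eqref{eq:mat-ineq}, which are equivalent here) and then distribute the integral over the exponents $4,2,4$ (the paper by two successive Cauchy--Schwarz applications, you by a single three-factor H\"older inequality), finishing identically by dominating the gradient seminorm with $\|\vec{v}\|_1$. These are cosmetic variations on the same argument, so no further comparison is needed.
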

\begin{proof}
We will work with the integrand first. To this end, we have that
\begin{align*}
\vec{u}^T(\nabla\vec{v})\vec{w}
=
\sum_{i=1}^d u_i \vec{w}^T\nabla v_i
\le \sum_{i=1}^d|u_i|\|\vec{w}\|_{\R^d}\|\nabla v_i\|_{\R^d}
= \|\vec{w}\|_{\R^d}\sum_{i=1}^d|u_i|\|\nabla v_i\|_{\R^d}\\
\le \|\vec{w}\|_{\R^d}\left(\sum_{i=1}^d|u_i|^2\right)^{1/2}\left(\sum_{i=1}^d\|\nabla v_i\|_{\R^d}^2\right)^{1/2}
=
 \|\vec{w}\|_{\R^d}\|\vec{u}\|_{\R^d}\|\nabla\vec{v}\|_{\R^{d\times d}}.
\end{align*}
Now we integrate,
\begin{align*}
&\int_\Omega|\vec{w}\|_{\R^d}\|\vec{u}\|_{\R^d}\|\nabla\vec{v}\|_{\R^{d\times d}}\d x\\
&\le\left(\int_\Omega\|\vec{u}\|_{\R^d}^2\|\vec{w}\|_{\R^d}^2\d x\right)^{1/2}\left(\int_\Omega\|\nabla\vec{v}\|_{\R^{d\times d}}^2\right)^{1/2}\\
&\le\left(\int_\Omega\|\vec{u}\|_{\R^d}^4\d x\right)^{1/4}\left(\int_\Omega\|\vec{w}\|_{\R^d}^4\d x\right)^{1/4}\left(\int_\Omega\|\nabla\vec{v}\|_{\R^{d\times d}}^2\d x\right)^{1/2}\\
&=\|\vec{u}\|_{\vec{L}^4}\|\vec{w}\|_{\vec{L}^4}|\vec{v}|_1\le\|\vec{u}\|_{\vec{L}^4}\|\vec{w}\|_{\vec{L}^4}\|\vec{v}\|_1.
\end{align*}
\end{proof}

\bibliographystyle{siamplain}
\bibliography{analysis_article}
\end{document}